\titleformat{\subsection}[runin]{\normalfont\bfseries}{\thesubsection.}{.5em}{}[.]\titlespacing{\subsection}{0pt}{2ex plus .1ex minus .2ex}{.8em}
\titleformat{\subsubsection}[runin]{\normalfont\itshape}{\thesubsubsection.}{.3em}{}[.]\titlespacing{\subsubsection}{0pt}{1ex plus .1ex minus .2ex}{.5em}
\titleformat{\paragraph}[runin]{\normalfont\itshape}{\theparagraph.}{.3em}{}[.]\titlespacing{\paragraph}{0pt}{1ex plus .1ex minus .2ex}{.5em}
\definecolor{vdarkred}{rgb}{0.6,0,0.2}
\definecolor{vdarkblue}{rgb}{0,0.2,0.6}
\newcommand{\ld}{\ldots}
\newcommand{\beg}{\begin}
\newcommand{\en}{\end}
\newcommand{\trm}{\textrm}
\newcommand{\bgt}{\begin{itemize}}
\newcommand{\ent}{\end{itemize}}
\newcommand{\ite}{\item}
\newcommand{\eqre}{\eqref}
\newcommand{\la}{\label}
\newcommand{\rfl}{\rfloor}
\newcommand{\lfl}{\lfloor}
\newcommand{\Var}{\operatorname{Var}}
\newcommand{\Cov}{\operatorname{Cov}}
 \newcommand{\bgn}{\begin{enumerate}}
\newcommand{\enn}{\end{enumerate}}
\newcommand{\ds}{\displaystyle}
\newcommand{\p}{\mathbb{P}}
\newcommand{\E}{\mathbb{E}}
\newcommand{\whp}{w.h.p.\ }
\newcommand{\whpv}{w.h.p., }
\newcommand{\pro}{probability }
\newcommand{\f}{\frac}
\newcommand{\ff}{\frac{1}}
\newcommand{\lf}{\left}
\newcommand{\ri}{\right}
\newcommand{\st}{such that }
\newcommand{\lam}{\lambda}
\newcommand{\vfi}{\varphi}
\newcommand{\ste}{\, :\, }
\newcommand{\mc}{\mathcal }
\newcommand{\eps}{\varepsilon}
\newcommand{\cN}{\mathcal{N}}
\newcommand{\bck}{\backslash}
\newcommand{\udl}{\underline}
\newcommand{\bbm}{\begin{bmatrix}}
\newcommand{\ebm}{\end{bmatrix}}
\newcommand{\bes}{\begin{equation*}}
\newcommand{\ees}{\end{equation*}}
\newcommand{\be}{\begin{equation}}
\newcommand{\ee}{\end{equation}}
\newcommand{\one}{\mathbbm{1}}
\newcommand{\ie}{i.e.\ }
\newcommand{\bpm}{\begin{pmatrix}}
\newcommand{\epm}{\end{pmatrix}}
\newcommand{\cd}{\cdots}
\newcommand{\bpr}{\beg{proof}}
\newcommand{\epr}{\en{proof}}
\newcommand{\del}{\delta}
\newcommand{\Del}{\Delta}
\newcommand{\da}{\downarrow}
\newcommand{\uA}{\udl{A}}
\newcommand{\ka}{\kappa}
\newcommand{\pma}{p_{\max}}
\newcommand{\qma}{q_{\max}}
\newcommand*{\deq}{\mathrel{\vcenter{\baselineskip0.65ex \lineskiplimit0pt \hbox{.}\hbox{.}}}=}
\newcommand*{\eqd}{=\mathrel{\vcenter{\baselineskip0.65ex \lineskiplimit0pt \hbox{.}\hbox{.}}}}
\newcommand{\ABS}[1]{{{\left|#1\right|}}} 
\newcommand{\PAR}[1]{{{\left(#1\right)}}} 
\newcommand{\AND}{{\quad \trm{ and } \quad }}
\newcommand{\BIN}{\mathrm{Bin}}
\newcommand{\BER}{\mathrm{B}}
\newtheorem{Th}{Theorem}[section]
\newtheorem{lem}[Th]{Lemma}
\newtheorem{cor}[Th]{Corollary}
\theoremstyle{definition}
\newtheorem{rmk}[Th]{Remark}
\newtheorem{Def}[Th]{Definition}
\definecolor{darkblue}{rgb}{0,0.3,0.9}
\renewcommand{\cal}{\mathcal}
\newcommand{\ul}[1]{\underline{#1} \!\,} 
\newcommand{\me}{\mathrm{e}}
\newcommand{\col}{\mathrel{\vcenter{\baselineskip0.75ex \lineskiplimit0pt \hbox{.}\hbox{.}}}}
\renewcommand{\leq}{\leqslant}
\renewcommand{\geq}{\geqslant}
\renewcommand{\le}{\leqslant}
\renewcommand{\ge}{\geqslant}
\renewcommand{\epsilon}{\varepsilon}
\newcommand{\floor}[1] {\lfloor {#1} \rfloor}
\newcommand{\pb}[1]{\bigl({#1}\bigr)}
\newcommand{\pB}[1]{\Bigl({#1}\Bigr)}
\newcommand{\pbb}[1]{\biggl({#1}\biggr)}
\newcommand{\qb}[1]{\bigl[{#1}\bigr]}
\newcommand{\hbb}[1]{\biggl\{{#1}\biggr\}}
\newcommand{\absB}[1]{\Bigl\lvert #1 \Bigr\rvert}
\newcommand{\absbb}[1]{\biggl\lvert #1 \biggr\rvert}
\numberwithin{equation}{section}
\title{Largest eigenvalues of sparse inhomogeneous Erd\H{o}s-R\'enyi graphs}
\author{Florent Benaych-Georges \and Charles Bordenave   \and Antti Knowles}
\begin{document}
\maketitle

\beg{abstract}We consider inhomogeneous Erd\H{o}s-R\'enyi graphs. We suppose that the maximal mean degree $d$ satisfies $d \ll \log n$. We characterize the asymptotic behavior of the $n^{1 - o(1)}$ largest  eigenvalues of the adjacency matrix and its centred version. We prove that these extreme eigenvalues are governed at first order by the largest degrees and, for the adjacency matrix, by the nonzero eigenvalues of the expectation matrix. Our results show that the extreme eigenvalues exhibit a novel behaviour which in particular rules out their convergence to a nondegenerate point process.
Together with the companion paper \cite{FBGCBAK2016}, where we analyse the extreme eigenvalues in the complementary regime $d \gg \log n$, this establishes a crossover in the behaviour of the extreme eigenvalues around $d \sim \log n$.
Our proof relies on a new tail estimate for the Poisson approximation of an inhomogeneous sum of independent Bernoulli random variables, as well as on an estimate on the operator norm of a pruned graph due to Le, Levina, and Vershynin from \cite{LeVershynin}.
\en{abstract}

\section{Introduction} 
 
The purpose of the present text is to understand the extreme eigenvalues of the adjacency matrix of an inhomogeneous Erd\H{o}s-R\'enyi random graph on $n$ vertices in the regime where the maximal mean degree $d$ satisfies $d \ll \log n$. Heuristically, such eigenvalues arise from three different origins: (i) the edge of the limiting bulk eigenvalue density, (ii) vertices of large degrees, and (iii) outliers associated with nonzero eigenvalues of the expectation matrix. One goal of this paper is a precise understanding of this interplay between random matrices on the one hand and the geometry of random graphs on the other. Such questions have several motivations from applications, such as the estimation of the spectral gap and spectral clustering.

The simplest random graph is the Erd\H{o}s-R\'enyi random graph $G(n,d/n)$, where each edge is present independently with probability $d/n$. In this case it is rather well understood that the behaviour of the extreme eigenvalues in the regime $d \gg \log n$ is governed by random matrix behavior; see \cite{KF,VuLargestEig,MR2155709,FBGCBAK2016, LS1, EKYY1,EKYY2}. In the complementary regime $d \ll \log n$, the main result available up to now was due to Sudakov and Krivelevich \cite{MR1967486}, who showed that the largest eigenvalue of the adjacency matrix is asymptotically equivalent to the maximum of the maximal mean degree $d$ and the square root of the largest degree (their result holds in fact for all regimes of $d$). 

Our main result is a description of the behaviour of the $n^{1 - o(1)}$ largest and smallest eigenvalues of the adjacency matrix $A$ and its centred version $\ul A \deq A - \E A$, for an inhomogeneous Erd\H{o}s-R\'enyi random graph whose mean degree $d$ is much smaller than $\log n$. Informally, we prove that the $k$-th largest eigenvalue eigenvalue of $\ul A$ satisfies
\begin{equation} \label{lambda_intro}
\lambda_k(\ul A) \approx \sqrt{\frac{(\log ( n / k) }{\log ((\log n) / d)}}\,, \qquad k \geq n^{1-\eps} \,, \qquad \eps \in (0,1)\,.
\end{equation}
Under mild additional assumptions (satisfied for instance by stochastic block models), we show that the same result holds for the eigenvalues of $A$, with the exception of some outlier eigenvalues whose locations we also characterize.

A consequence of our results, combined with those from the companion paper \cite{FBGCBAK2016}, where we analyse the extreme eigenvalues in the complementary regime $d \gg \log n$, is a crossover in the behaviour of the extreme eigenvalues around $d \sim \log n$ (the same threshold as for the graph connectivity). Indeed, in \cite{FBGCBAK2016} we prove that if $d \gg \log n$ then all eigenvalues are asymptotically contained within the support of the semicircle law describing the macroscopic eigenvalue density, while in the current paper we establish for $d \ll \log n$ a novel behaviour of the extreme eigenvalues, which implies that $n^{1 - o(1)}$ eigenvalues escape the support of the semicircle law. Their locations are governed by \eqref{lambda_intro} and define a distribution that is illustrated in Figure \ref{Fig:histo_edge} below.

It is helpful to analyse the behaviour of the extreme eigenvalues for $d \ll \log n$ in the context of random matrix theory.
Until now, in random matrix theory two different types of universal behaviour at leading order of the extreme eigenvalues have been established, exhibited for instance by light- and  heavy-tailed Wigner matrices respectively. After a suitable deterministic rescaling of the matrix, these two classes may be characterized as follows.
\begin{itemize}
\item[(a)]
The extreme eigenvalues converge to the edge of the support of the asymptotic bulk spectrum.
\item[(b)]
The extreme eigenvalues form asymptotically a Poisson point process.
\end{itemize}
For example, it is known \cite{ABP,SoshPoi,YL} that a Wigner matrix whose entries have tail decay $x^{-\alpha}$ belongs to class (a) if $\alpha > 4$ and to class (b) if $\alpha < 4$. Moreover, as stated above, in the companion paper  \cite{FBGCBAK2016} we prove that the Erd\H{o}s-R\'enyi graph belongs to class (a) if $d \gg \log n$. Also, sparse heavy-tailed random matrices exhibit a transition between these classes depending on the sparsity and the tail decay of the entries \cite{BGPech}.

A consequence of our results is that, perhaps surprisingly, for $d \ll \log n$, the (possibly inhomogeneous) Erd\H{o}s-R\'enyi graph belongs to neither class (a) nor class (b). Instead, the behaviour from \eqref{lambda_intro} results in a sharp increase in the density of eigenvalues as one moves towards the centre of the spectrum, which implies that, no matter the rescaling of the spectrum, any nondegenerate limiting point process will be infinite on compact sets.

The proof consists of two main steps. In a first step, we analyse the distribution of the $n^{1 - o(1)}$ largest degrees, and prove that the corresponding vertices are with high probability separated by distance at least 3 from each other. The key tool behind this step is a new sharp estimate (Theorem \ref{th:tail} below) on the tail of a sum of inhomogeneous independent Bernoulli random variables. This estimate may be regarded as an improvement for the tails of the well-known Poisson approximation provided by Le Cam's inequality \cite{BHJBook}. It is of independent interest. In a second step, we compare the $n^{1 - o(1)}$ largest eigenvalues of the graph with those of the graph obtained by only keeping the edges incident to the $n^{1 - o(1)}$ vertices of largest degree. The latter corresponds to a block-diagonal matrix whose blocks are associated with star graphs of high central degree. This comparison is based on a sharp estimate on the operator norm of the complementary graph due to Le, Levina, and Vershynin \cite[Theorem 2.1]{LeVershynin}.

This text is organized as follows. In the remainder of the introduction, we state our main results, which are proved in Section 
\ref{sec:proof}. In Section \ref{sec:Poisson} we state and prove the new tail estimate for Poisson approximation mentioned above.

\paragraph{Notation} 
   The eigenvalues of a Hermitian $n \times n$ matrix $H$ are denoted by 
 $\lam_1(H)\ge \lam_2(H)\ge\cd \geq \lambda_n(H)$. Its operator norm is given by $\| H \|  =  \max( \lambda_1 (H) , \lambda_1 (-H) ) $.
For $p\in [0,1]$, we denote by $\BER(p)$ the Bernoulli law with parameter $p$, \ie $\BER(p)=(1-p)\del_0+p\del_1$. We denote by $\BIN( p_1, \ldots , p_n)$ the law $\BER(p_1) * \cdots * \BER(p_n)$.
In particular, $\BIN(p,\ldots, p)$ is the Binomial distribution with parameters $(n,p)$. For $x > 0$ use the abbreviation $[x] \deq \{1,2, \dots, \floor{x}\}$.

 \subsection{Hypotheses and definitions}

Throughout this paper, $A$ is the  adjacency matrix of an inhomogeneous (undirected) Erd\H{o}s-R\'enyi random graph $G$ with vertex set $[n]$, where the edge $\{i,j\}$ is included with \pro $p_{ij} \in [0,1]$ independently of the others. Note that we allow loops: there is a loop at vertex $i$ with probability $p_{ii}$.

The maximal edge probability is
$$
\pma \deq \max_{i \ne j}p_{ij}\,.  
$$
The mean degree of the vertex $i \in [n]$ and the maximal mean degree are defined as
$$
d_i \deq \sum_j p_{ij} \,, \qquad    d \deq \max_i d_i
   $$
respectively.

We always suppose that there are  $\ka>0$ and $\eta\in (0,1)$ \st \be\la{eq:ordersA}\ka\le d\le \eta\log n\qquad\AND\qquad \pma\le n^{-1+\eta}\,.
\ee
As all of our error term controls will be uniform, with quantitative rates of convergence, in the parameters $(p_{ij})_{i,j\in [n]}$ \st \eqref{eq:ordersA} holds, we introduce the following definitions.

 \beg{Def}\la{def:error_control}\begin{enumerate}
 \ite An \emph{admissible error function} is a function $\psi(n,\ka,\eta)$ satisfying \be\la{def:03041718h49}\forall\ka>0\,, \qquad \lim_{\substack{n\to\infty\\ \eta\to 0}}\psi(n,\ka,\eta)=0\,.\ee
 \ite Given an event $E$ and a condition $\cal A$ on the parameters $\kappa,\eta,(p_{ij})_{i,j\in [n]}$, we say that, \emph{under $\cal A$, $E$ holds with high probability (w.h.p.)}\ if there is an admissible error function $\psi(n,\ka,\eta)$  
 such that  $$\p(E)\ge 1-\psi(n,\ka,\eta)$$ for all $\kappa,\eta,(p_{ij})_{i,j\in [n]}$ satisfying $\cal A$.
 \ite Given a condition $\cal A$ on the parameters $\kappa,\eta,(p_{ij})_{i,j\in [n]}$, for two families of random variables $(u_t), (v_t)$ we say that \emph{under $\cal A$, for all $t$, $u_t \sim v_t$} if there is an admissible error function $\psi(n,\ka,\eta)$ 
 such that  $$\p\lf(\absbb{\f{v_t}{u_t}-1}\le \psi(n,\ka,\eta) \text{ for all } t \ri)\ge 1-\psi(n,\ka,\eta)$$ for all $\kappa,\eta,(p_{ij})_{i,j\in [n]}$ satisfying $\cal A$.
 \end{enumerate} 
 \end{Def}
 Let us emphasize that the point in this definition is the uniformity of the error terms in the asymptotic regime where $n\to \infty$, $d=o( \log n)$, and $\pma=n^{-1+o(1)}$.
To simplify presentation, in the following we shall not identify the error functions $\psi(n,\kappa,\eta)$ explicitly, although a careful look at our proofs will easily yield explicit expressions for them.

Finally, for $k \in [n]$ we set
\begin{equation}\label{eq:defLk}
L_k \deq \f{\log (n/k)}{\log((\log n)/d)}\,.
\end{equation}

 \subsection{Relation between the centred adjacency matrix and the largest degrees}

For $i \in [n]$, let $D_i$ denote the degree of the vertex $i$ in the graph $G$.
Denote by $$D_1^{\da}\ge \cd\ge D_n^{\da}$$ the decreasingly ordered degrees $D_1, \dots, D_n$. We also introduce the centred adjacency matrix
 $$
 \uA \deq A - \E A\,. 
 $$
By definition, $(\E A)_{ij} = p_{ij}$. 
 The following theorem relates the largest eigenvalues of $\uA$ to the largest degrees, whose behaviour is described in Propositions \ref{propo_position_degree_max} and \ref{27516} below.
 \beg{Th}\la{thalphall12}For any $\eps  \in (0, 1)$, under \eqref{eq:ordersA}, w.h.p.,
\begin{align}\la{Estlam816} \max_{k \in [n^{1-\eps}]} \absB{\lam_k (\uA)-\sqrt{D_k ^{\da}}} &\le C  \sqrt{ n\pma }  + \eps\sqrt{L_1} \,,\\
\la{Estlam8160}\max_{k \in [n^{1-\eps}]} \absB{\lam_{n+1-k} (\uA)+\sqrt{D_k ^{\da}}} &\le  C \sqrt{ n\pma }  +  \eps \sqrt{L_1} \,,
\end{align}  
   where $C$ is a universal constant and $L_1$ is defined in \eqref{eq:defLk}. 
\en{Th}

The proof of Theorem \ref{thalphall12} is based on an analysis of the graph spanned by the largest degree vertices, and on \cite[Theorem 2.1]{LeVershynin} due to Le, Levina, and Vershynin on the operator norm of the centred adjacency matrix where all large degree vertices have been removed. The term $\pm \sqrt{D_k ^{\da}}$ arises as an eigenvalue of a star graph with central degree $D_k^{\da}  - O(1)$ (see Definition \ref{def:star_graph} below). 

By Proposition \ref{propo_position_degree_max} below, for any $\eps \in (0,1)$, we have $D_1^\da \leq (1+\eps) L_1$ w.h.p., which yields the following corollary.
\begin{cor}\label{cor:barA0}
For any $\eps \in (0,1)$, under \eqref{eq:ordersA}, w.h.p.,
$$
\|\uA \| \leq (1 + \eps) \sqrt L_1 + C \sqrt{ n \pma}\,.
$$
\end{cor}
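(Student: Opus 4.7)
The plan is to deduce the corollary directly from Theorem~\ref{thalphall12} applied at $k = 1$, together with the high-probability upper bound on $D_1^\da$ from Proposition~\ref{propo_position_degree_max}. First I would recall that $\|\ul A\| = \max(\lambda_1(\ul A), -\lambda_n(\ul A))$. Specializing \eqref{Estlam816} and \eqref{Estlam8160} to $k = 1$ (with a parameter $\eps' \in (0,1)$ to be chosen below) and using the triangle inequality gives, w.h.p.,
\begin{equation*}
\lambda_1(\ul A) \le \sqrt{D_1^\da} + C\sqrt{n\pma} + \eps'\sqrt{L_1}
\qquad \text{and} \qquad
-\lambda_n(\ul A) \le \sqrt{D_1^\da} + C\sqrt{n\pma} + \eps'\sqrt{L_1},
\end{equation*}
and hence the same upper bound holds for $\|\ul A\|$.

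Next, I would apply Proposition~\ref{propo_position_degree_max} with some $\eps'' \in (0,1)$ to obtain $D_1^\da \le (1+\eps'')L_1$ w.h.p., whence
\begin{equation*}
\|\ul A\| \le \pb{\sqrt{1+\eps''} + \eps'}\sqrt{L_1} + C\sqrt{n\pma}.
\end{equation*}
Given the target $\eps \in (0,1)$, it then suffices to pick $\eps'$ and $\eps''$ small enough that $\sqrt{1+\eps''} + \eps' \le 1 + \eps$; for instance, taking $\eps'' = \eps$ and $\eps' = 1 + \eps - \sqrt{1+\eps} > 0$ works. A union bound combines the two w.h.p.\ events, and the maximum of the two admissible error functions (in the sense of Definition~\ref{def:error_control}) is again admissible, so the conclusion holds w.h.p.

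The main obstacle is essentially nonexistent: all genuine work is packaged inside Theorem~\ref{thalphall12} (the comparison with star-graph eigenvalues and the Le--Levina--Vershynin norm bound) and Proposition~\ref{propo_position_degree_max} (the sharp control on the maximal degree via the Poisson tail estimate). The only item requiring a moment of care is the bookkeeping on the prefactor, namely arranging the free parameters so that $\sqrt{1+\eps''} + \eps'$ fits under $1+\eps$; this is straightforward because $\sqrt{1+\eps} < 1+\eps$ for $\eps > 0$.
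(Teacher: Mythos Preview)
Your proof is correct and follows exactly the approach indicated in the paper, which simply notes that Proposition~\ref{propo_position_degree_max} gives $D_1^\da \le (1+\eps)L_1$ w.h.p.\ and that this, combined with Theorem~\ref{thalphall12}, yields the corollary. Your version just makes the $\eps$-bookkeeping explicit, which the paper leaves to the reader.
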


As explained, for example, in \cite{LeVershynin}, Corollary  \ref{cor:barA0} finds applications in the analysis of spectral clustering techniques on random graphs. 

Under the additional hypothesis that all vertices have the same mean degree, the behaviour of the largest degrees summarized in Corollary \ref{cor:equivd} below implies that $D_k^{\da} \sim L_k$, where $L_k$ was defined in \eqref{eq:defLk}.
We deduce the following result.

\begin{cor}\label{cor:barA} Let $\eps \in (0,1)$. Then under the conditions $d_i = d$ for all $i$, $ n\pma \le \eta L_1$, and \eqref{eq:ordersA}, we have for all $k \in [n^{1-\eps}]$
\be\la{EstLam916FB}\lam_k(\uA)\sim \sqrt{ L_k} \AND \lam_{n+1-k} (\uA)\sim  - \sqrt{ L_k}\,.\ee
\end{cor}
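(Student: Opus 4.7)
The plan is to deduce \eqref{EstLam916FB} directly from Theorem \ref{thalphall12} combined with the asymptotic $D_k^\da \sim L_k$ provided by Corollary \ref{cor:equivd}, which under the homogeneous mean-degree hypothesis $d_i = d$ holds uniformly for $k \in [n^{1-\eps}]$. No fresh spectral input is required; the corollary is essentially an arithmetic reorganization of these two inputs together with the two side hypotheses $n\pma \le \eta L_1$ and $k \le n^{1-\eps}$.

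First I would apply Theorem \ref{thalphall12} with an auxiliary parameter $\eps' \in (0,\eps]$ (to be tuned later). Since $[n^{1-\eps}] \subset [n^{1-\eps'}]$, this gives, \whp,
$$\absB{\lam_k(\uA) - \sqrt{D_k^\da}} \le C\sqrt{n\pma} + \eps'\sqrt{L_1} \quad \text{for every } k \in [n^{1-\eps}].$$
Next I would insert the estimate $\sqrt{D_k^\da} = \sqrt{L_k}\,(1+o(1))$ coming from Corollary \ref{cor:equivd}, divide by $\sqrt{L_k}$, and use the two elementary inequalities $L_k \ge \eps L_1$ (immediate from \eqref{eq:defLk} for $k \le n^{1-\eps}$) and $\sqrt{n\pma} \le \sqrt{\eta L_1}$ (the standing hypothesis). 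Together these yield, \whp,
$$\absbb{\frac{\lam_k(\uA)}{\sqrt{L_k}} - 1} \le \frac{C\sqrt{\eta} + \eps'}{\sqrt{\eps}} + o(1)$$
uniformly in $k \in [n^{1-\eps}]$. The identical argument applied to \eqref{Estlam8160} in place of \eqref{Estlam816} delivers the corresponding bound for $\lam_{n+1-k}(\uA)$.

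The one genuinely delicate point will be promoting the above estimate to the uniform form demanded by Definition \ref{def:error_control}(iii): the residual $\eps'/\sqrt{\eps}$ must be driven to zero, which forces $\eps'$ to depend on $(n,\eta)$. This is handled by a standard diagonal argument: one chooses $\eps' = \eps'(n,\eta)$ tending to $0$ slowly enough that the admissible error function supplied by Theorem \ref{thalphall12} applied with parameter $\eps'(n,\eta)$ still tends to zero as $n \to \infty$ and $\eta \to 0$ (with $\eps$ and $\kappa$ fixed). With such a choice of $\eps'$ in hand, the right-hand side above becomes an admissible error function in the sense of Definition \ref{def:error_control}(i), which is the content of the two $\sim$ assertions in \eqref{EstLam916FB}.
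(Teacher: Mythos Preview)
Your proposal is correct and follows the same approach as the paper, which simply states (just before the corollary) that the result is deduced from Theorem \ref{thalphall12} together with the asymptotic $D_k^\da \sim L_k$ from Corollary \ref{cor:equivd}. You have filled in the arithmetic details that the paper omits, including the observation $L_k \ge \eps L_1$ and the use of the hypothesis $n\pma \le \eta L_1$; the diagonal choice of $\eps' = \eps'(n,\eta)$ is a legitimate way to absorb the residual $\eps'/\sqrt{\eps}$ into an admissible error function, a point the paper leaves implicit.
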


\beg{rmk}\la{rem:pospart2}
There is an equivalent way to state Corollary \ref{cor:barA}. Introduce the counting function of the renormalized eigenvalues of $\uA=A-\E A$, defined as \be\la{eq:defNAu}N_{\uA}(x) \deq \#\hbb{k \in [n] \ste \frac{\lam_k (\uA)}{ \sqrt L_1} \ge x}\,.\ee
The first estimate of \eqref{EstLam916FB} implies  that for any $x \in (0, 1)$,    \be\la{EstN816} \f{\log N_{\uA}(x)}{\log n}\sim 1-x^2\,.\ee
Indeed, for any $\del>0$ small enough, for $k:=\lfl n^{1-x^2-\del}\rfl$ and $k':=\lceil n^{1-x^2+\del}\rceil$. We have 
$$k\le N_{\uA}(x)<k'\iff \lam_{k'}(\uA)<x\sqrt{L_1}\le \lam_k\,,$$ 
which happens w.h.p.\ by the first estimate of \eqref{EstLam916FB}.

Informally, \eqref{EstN816} states that $N_{\uA}(x)\approx n^{1-x^2}$, from which we deduce that the density of renormalized eigenvalues 
$ \frac{\lam_k (\uA)}{ \sqrt L_1}$ at $x\in (0,1)$ is asymptotically
\begin{equation} \la{dens}
2 \log (n) \, n^{1 - x^2} x\,.
\end{equation}
See Figure \ref{Fig:histo_edge} below for an illustration.
\en{rmk}

\begin{figure}[!ht]
\centering
\includegraphics[scale=.4]{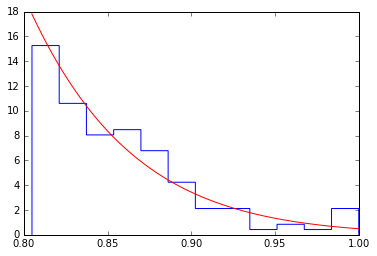}\includegraphics[scale=.4]{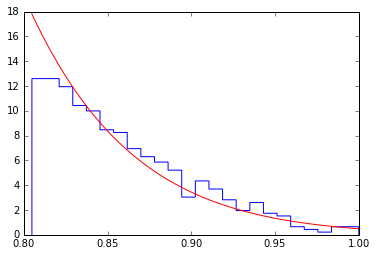}\includegraphics[scale=.4]{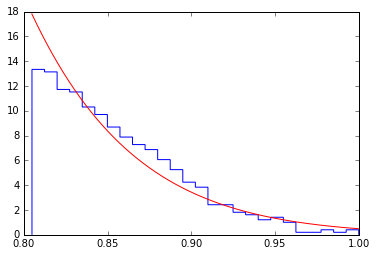}
\caption{Histogram of the right edge of the spectrum of $A$ in the case of a homogenous Erd\H{o}s-R\'enyi graph and density of \eqre{dens}. The eigenvalues are renormalized in such a way that $\lam_2=1$ ($\lam_1$ has been removed) and the histogram as well as the density are normalized in such a way that the total area is $1$. Here, $n=5\cdot 10^4$ and $d=0.5$ (left), $d=1.5$ (centre), $d=2.5$ (right). We see that as $d$ grows, the empirical density gets more convex at its edge, which agrees with the idea that the semicircle law approximation  gets more accurate.}\la{Fig:histo_edge}
\end{figure}

\begin{rmk} \label{rmk:Xi}
The estimate \eqref{EstN816} states there exists no deterministic sequence $\alpha = \alpha_n$ such that the point process
\begin{equation*}
\Xi \deq \{\alpha \lambda_k(\ul A) \col k \in [n]\}
\end{equation*}
is asymptotically finite and nonzero on compact sets. In particular, $\Xi$ cannot converge to a point process as $n \to \infty$. Note, however, that our results do not rule out the existence of an affine transformation parametrized by $\alpha = \alpha_n$ and $\beta = \beta_n$ such that the point process $\{\alpha (\lambda_k(\ul A) - \beta) \col k \in [n]\}$ converges.
\end{rmk}

  \subsection{Consequences for the adjacency matrix}

Gershgorin's Circle Theorem implies that
$$
\| \E A \| \leq d\,. 
$$ Then, writing $A=\uA+\E A$,  following corollary is an immediate consequence of Theorem \ref{thalphall12} and Weyl's inequality (see e.g.\ \cite[Corollary III.2.6]{Bhatia}).

  \beg{cor}\la{thalphall12cor}
  \bgt\ite[(a)]
  Theorem \ref{thalphall12} holds with $\ul A$ replaced by $A$ and the right-hand sides of \eqref{Estlam816}--\eqref{Estlam8160} replaced by $\ds C \sqrt{n\pma } + \eps\sqrt{L_1} +  d$.
\ite[(b)] Under \eqref{eq:ordersA}, w.h.p., for any $k \in [n]$,
 $$
\ABS{ \lambda_k (A)  -  \lambda_k ( \E A) } \leq C \pB{ \sqrt L_1 + \sqrt {n \pma}}\,,
 $$
 for some universal constant $C$.
 \ent
\en{cor}

\beg{rmk}
As $L_1\sim D_1^\da$, for an homogenous Erd\H{o}s-R\'enyi random graph, Corollary \ref{thalphall12cor}    is consistent with \cite[Theorem 1.1]{MR1967486} which asserts that $\lambda_1 (A)  = \| A \| \sim \max \{ \sqrt{D_1^\da} , d\}$ in all regimes of $d$.
\en{rmk}

\subsection{Applications to stochastic block models}
In the stochastic block model, $\E A$ has bounded rank and all its nonzero eigenvalues are of order $d$. We denote by $\lambda_1^+ \geq \cdots \geq \lambda_{k^+}^+ > 0$ the positive eigenvalues of $\E$ and $\lambda_1^- \leq \cdots \leq \lambda_{k^-}^- < 0$ the negative eigenvalues of $\E A$. For $\ka$ the constant of   \eqref{eq:ordersA}, we suppose that \be\la{541714h} d\ge \lambda_1^+ \geq \cdots \geq \lambda_{k^+}^+ \ge\ka d\,,\quad -d \leq \lambda_1^- \leq \cdots \leq \lambda_{k^-}^-\leq -\kappa d\,,\quad k^+ +k^- \le \ka^{-1}\,.
\ee 
Then, there is a dichotomy in the behaviour of the $k^+ + k^-$ largest (in absolute value) eigenvalues of $A$, depending on whether $d \ll \sqrt L_1$ or $d \gg \sqrt L_1$. Under mild conditions on $d$, these conditions read $d \ll \sqrt { \log n / \log \log n}$ or $d \gg \sqrt { \log n / \log \log n}$.

\beg{propo}\la{propo:SBM}Let $\eps \in (0,1)$. 
\begin{enumerate}\item[(a)] Under conditions \eqref{eq:ordersA} and  $ d\le \eta \sqrt { \log n / \log \log n}$,  
w.h.p.\  
\be\la{propo:SBM1}
\|A \| \leq (1 + \eps) \sqrt L_1 + C \sqrt{ n \pma}\,.
\ee
 Under the additional conditions $d_i = d$ for all $i$ and $ n\pma \le \eta L_1$,  we have for all $k \in [n^{1-\eps}]$
\be\la{propo:SBM2}\lam_k(A)\sim \sqrt{ L_k} \AND \lam_{n+1-k} (A)\sim  - \sqrt{ L_k}\,.\ee
\item[(b)] Under conditions \eqref{eq:ordersA}, \eqref{541714h},  and  $ d\ge \eta^{-1} \sqrt { \log n / \log \log n}$,  
\be\la{propo:SBM3}
\forall i=1, \ld, k^+\,, \quad \lam_i(A)\sim  \lam^+_i\,, \qquad\qquad \forall i=1\,, \ld, k^-\,,\quad \lam_{n+1 - i}(A)\sim  \lam^-_{i}\ee and w.h.p., \be\la{propo:SBM4}\max\{ |\lam_{k^++1}(A)|, |\lam_{n-k^-}(A)|\} \le (1 + \eps) \sqrt L_1 + C \sqrt{ n \pma}\,.
\ee
 Under the additional   conditions $d_i = d$ for all $i$ and $ n\pma \le \eta L_1$, we have for all $k \in [n^{1-\eps}]$
\be\la{propo:SBM5}\lam_{k^++k}(A)\sim \sqrt{ L_k} \AND \lam_{n+1-(k^- + k)} (A)\sim  - \sqrt{ L_k}\,.\ee
\end{enumerate}
\en{propo}

We remark that in the case (a) of small degree, the nontrivial eigenvalues $\lambda^\pm_i$ of $\E A$ do not give rise to corresponding eigenvalues of $A$, and $A$ may be regarded as a perturbation of $\ul A$. In contrast, in the case (b) of large degree, the nontrivial eigenvalues $\lambda^\pm_i$ of $\E A$ gives rise to associated outlier eigenvalues of $A$, and $A$ may be regarded as a perturbation of $\E A$. Hence, the spectrum of $A$ retains some information about the spectrum of $\E A$ if and only if $ d\gg \sqrt { \log n / \log \log n}$.

 \beg{rmk}Remarks \ref{rem:pospart2} and \ref{rmk:Xi} also hold for the eigenvalue counting measure of $A$. See Figure \ref{Fig:histo_edge} for an illustration. \en{rmk}

\begin{proof}[Proof of Proposition \ref{propo:SBM}]
The proof is a simple consequence of the results of the preceding subsections and of Weyl's inequalities. More specifically, use \cite[Corollary III.2.6]{Bhatia} to prove \eqref{propo:SBM1}, \eqref{propo:SBM2}, \eqref{propo:SBM3} and \eqref{propo:SBM4} considering $A$ as a perturbation of $\uA$ in (a) and $A$ as a perturbation of $\E A$ in (b).   To prove the first part of \eqref{propo:SBM5} (the second part can be proved in the same way),  
note that by \cite[Corollary III.2.3 and Exercise III.2.4]{Bhatia}
we have, for any $k$,  
$$\lam_{k+2k^++k^-}(\uA)\le \lam_{k^++k}(A)\le \lam_k(\uA)$$
and use Corollary \ref{cor:barA}. 
\end{proof}

\subsection{Behaviour of the largest degrees}
In our regime of interest, the largest degrees of the graph play a key role in the analysis of the largest eigenvalues of the adjacency matrix.  We now describe their asymptotic behavior.

  \beg{proposition}\la{propo_position_degree_max}
  Let $\eps \in (0,1)$. Under \eqref{eq:ordersA}, w.h.p., for any $k \in [n^{1-\eps}]$
  $$ D_k^{\da} \le (1+\eps)L_k \,.$$ 
 \en{proposition}

For any   $x>0$, we introduce the sets \be\la{def:usoleq0V}\mc{V}_{ \geq  x} \deq \{i \in [n] \ste D_i \geq  x\}\,, \qquad
\mc{V}_{= x} \deq \{i \in [n] \ste D_i = x\}\,.
\ee These sets are related to the ordered degrees through
\be\la{641710h}D^{\da}_k \geq x\iff \# \mc{V}_{\ge x} \geq k\,.\ee
Let us introduce the function $f$ on $(d,\infty)$ defined by
\begin{equation}\label{eq:deff}
f( x) =f_d(x)\deq x \log \PAR{ \frac x   d } - ( x-d) - \log \sqrt{2 \pi x }\,. 
\end{equation}
If $Y$ is a Poisson random variable with mean $d$, for a large integer $x$ Stirling's approximation gives
\begin{equation}\label{eq:fpoi}
\p ( Y = x ) =\me^{- f (x)  + O ( x^{-1})}\,. 
\end{equation}
We shall in fact prove that, 
roughly speaking, under condition \eqref{eq:ordersA}, we have  \be\la{641710h10} \#\mc{V}_{\ge x} \lesssim n\me^{f(x)}\,,\ee  which, under the additional assumption $d_i=d$ for all $i$, can be strengthened to  \be\la{641710h11}  \#\mc{V}_{\ge x} \approx n\me^{f(x)}\,.\ee  This leads us to introduce,
  for $k \in [n]$, the solution $\Del_k$ of the equation
$$ f(\Delta_k) =\log (n/k) \AND  \Delta_k \geq  d\,.$$
This solution is unique and satisfies
\begin{equation}\label{eq:asympDL}
 \Delta_k \sim L_k\,.
\end{equation}
(See Lemma \ref{LCDBMLEV} below for the full details.) The combination of the characterization \eqref{641710h} of the largest degrees with estimates \eqref{641710h10}, \eqre{641710h11} and \eqref{eq:asympDL} naturally leads to the estimates given in Propositions \ref{propo_position_degree_max} and \ref{27516}.

In the special case of a homogenous Erd\H{o}s-R\'enyi graph, the next proposition is essentially contained in \cite[Chapter 3]{MR1864966}. Hence, our next result may be viewed as a generalization of this result to the inhomogeneous case. It is a more precise version of Proposition \ref{propo_position_degree_max} under the additional assumption that all vertices have the same mean degree.
 \beg{propo}\la{27516}  Let $\epsilon \in (0,1)$. \begin{itemize}
\item[(a)] For any $k \in [n^{1-\eps}]$ there exists a deterministic    $\Delta'_k  \in \{  \lfloor \Delta_k \rfloor ,  \lceil \Delta_k \rceil \}$ \st 
under the conditions $d_i = d$ for all $i$ and \eqref{eq:ordersA}, w.h.p., for any $k \in [n^{1-\eps}]$,  we have \begin{enumerate}\ite $D^\da_{k} \in \{ \Delta'_k  ,  \Delta'_k   - 1 \}$,
\ite $D^\da_{k} = \lfloor \Delta_k \rfloor$ when $\mathrm{dist}( \Delta_k , \mathbb N) \geq \eps$.\end{enumerate}
\item[(b)]
Under the conditions $d_i = d$ for all $i$ and \eqref{eq:ordersA}, for all integers $t \in [\eps \Delta_1, \Delta_1 - \eps]$, 
 \be\la{estimate:cardVka} 
 \# \mc{V}_{\geq t}  \sim   \# \mc{V}_{= t}    \sim      n \me^{-f(t)}\,.
\ee
Under the same conditions, if $t \geq \Delta_1 + \eps$ then, w.h.p., $\# \mc{V}_{\geq t} = 0$.
\end{itemize}
\en{propo}

 An immediate corollary of Proposition \ref{27516} and \eqref{eq:asympDL} is the following. 
 
 \begin{cor}\label{cor:equivd}
 Let $\eps \in (0,1)$. Under the conditions $d_i = d$ for all $i$  and \eqref{eq:ordersA}, for all $k \in [n^{1-\eps}]$,   $$ D_k^{\da} \sim L_k \,.$$ 
  \end{cor}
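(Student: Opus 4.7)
The plan is to combine the two immediate ingredients cited by the paper itself: Proposition~\ref{27516}(a), which pins $D_k^\da$ to within an $O(1)$ additive error of $\Delta_k$, and the deterministic asymptotic $\Delta_k \sim L_k$ from \eqref{eq:asympDL}. Since $D_k^\da \sim L_k$ is a multiplicative statement in the sense of Definition~\ref{def:error_control}(iii), the only substantive point to verify is that $L_k \to \infty$ uniformly for $k \in [n^{1-\eps}]$, which renders the bounded additive fluctuation negligible after division by $L_k$.

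First I would apply Proposition~\ref{27516}(a) to obtain, w.h.p.\ and uniformly for $k \in [n^{1-\eps}]$, the inclusion $D_k^\da \in \{\Delta'_k, \Delta'_k - 1\}$ with $\Delta'_k \in \{\lfloor \Delta_k \rfloor, \lceil \Delta_k \rceil\}$. This immediately gives $|D_k^\da - \Delta_k| \leq 2$ simultaneously for all such $k$. Next, I would deduce the required divergence of $L_k$ from the definition \eqref{eq:defLk} and the lower bound $d \geq \kappa$ in \eqref{eq:ordersA}: for every $k \leq n^{1-\eps}$,
\[
L_k = \frac{\log(n/k)}{\log((\log n)/d)} \geq \frac{\eps \log n}{\log((\log n)/\kappa)},
\]
and the right-hand side tends to $+\infty$ as $n \to \infty$ with $\kappa$ held fixed. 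In particular, this lower bound is uniform in $k$ on the range of interest and furnishes an admissible error function.

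Combining these two inputs with the triangle inequality yields
\[
\absbb{\frac{D_k^\da}{L_k} - 1} \;\leq\; \frac{|D_k^\da - \Delta_k|}{L_k} + \absbb{\frac{\Delta_k}{L_k} - 1} \;\leq\; \frac{2}{L_k} + \absbb{\frac{\Delta_k}{L_k} - 1},
\]
where the first term on the right vanishes uniformly in $k \in [n^{1-\eps}]$ by the preceding lower bound on $L_k$, and the second vanishes uniformly by \eqref{eq:asympDL}. Taking the supremum over $k \in [n^{1-\eps}]$ of the left-hand side and absorbing both terms into a single admissible error function $\psi(n,\kappa,\eta)$ gives $D_k^\da \sim L_k$ in the precise sense of Definition~\ref{def:error_control}(iii). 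There is essentially no obstacle here beyond bookkeeping; the proof is really a one-line consequence of Proposition~\ref{27516}(a) and \eqref{eq:asympDL}, with the only nontrivial ingredient being the quantitative lower bound on $L_k$ that turns a bounded additive discrepancy into a negligible relative one.
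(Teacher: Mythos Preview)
Your proof is correct and follows exactly the approach the paper indicates: the paper states the corollary as ``an immediate corollary of Proposition~\ref{27516} and \eqref{eq:asympDL}'' without further detail, and you have supplied precisely those details, including the necessary observation that $L_k \to \infty$ uniformly for $k \in [n^{1-\eps}]$ so that the $O(1)$ additive gap between $D_k^\da$ and $\Delta_k$ becomes negligible upon division.
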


  \beg{rmk}[Lack of limit point process of largest degrees]\la{noPoisson} Proposition \ref{27516} (b) shows that, perhaps surprisingly, there is no Poisson point process at the right edge of the multiset of degrees of  $G$. There is instead a sharp transition at $\Delta_1$: for any integer $ t \leq \Delta_1-\eps$, \whp  the number of  vertices with degree $t$ is $\gg 1$ and for any integer $  t \ge \Delta_1+\eps$, \whp there is no vertex with degree $t$.
 \en{rmk}

\section{Estimation of the largest degrees and comparison with the eigenvalues} \label{sec:proof}

The rest of this paper is devoted to the proofs of our main results.

Throughout this section we use the following conventions about convergence of deterministic quantities. Let $u$ and $v$ be deterministic quantities depending on $n$ and $(p_{ij})_{i,j\in [n]}$. We write $u = o(v)$, or, equivalently, $u \ll v$, whenever $u/v \to 0$ as $n \to \infty$ and $\eta \to 0$, uniformly in $(p_{ij})_{i,j\in [n]}$ satisfying \eqref{eq:ordersA} and all parameters except $\epsilon$. We remark that such a convergence can always be upgraded to a quantitative convergence using some admissible error function from Definition \ref{def:error_control}, but for the sake of simplicity we shall not do this.

\subsection{Largest degrees: proof of Proposition \ref{propo_position_degree_max} and   Proposition  \ref{27516}}

Recall that the function $f$ was defined in \eqref{eq:deff}.

\beg{lem}\la{LCDBMLEV} Let  $\epsilon \in (0,1)$.
 For $n$ large enough and $\eta$ small enough, under condition \eqref{eq:ordersA}, for any $k \in [n^{1-\eps}]$, there exists a unique solution $\Delta_k$ of the equation
 \be \la{def:usoleq0} f(\Delta_k) =\log (n/k) \AND  \Delta_k \geq  d\,.\ee
 Moreover, under condition \eqref{eq:ordersA}, for any $k \in [n^{1-\eps}]$, \begin{equation}\label{def:usoleq1} 
 \Delta_k \sim L_k \,.
 \end{equation}
 \en{lem}

\bpr   The function $f$ is increasing  on $(d,\infty)$ (indeed, $f'(u)=\log(u/d)+1/(2u)$) and satisfies  \be\la{f:est1}  f(u) =  u\log(u/d) + O ( u)\qquad \trm{ as $u\to\infty$}\,,\ee so that for $n$ large enough, $\Del_k$ is well defined for any $1 \leq k \leq n^{1 - \eps}$. Moreover, (unconditionally on $d$), we have   \be\la{f:est2}  f(u+x)=f(u)+x\log (u/d)+O ( x  / u + x^2 / u)\,. \ee  
Indeed,  \be f(u+x)-f(u)=(u+x)\log\lf(1+\f{x}{u}\ri)+x\log \PAR{ \frac u d}-x+\ff{2}\log\pB{1+\f{x}{u}}\,.
\ee

Let us now prove \eqref{def:usoleq1}. As both $\Del_k=\Del_k(n,d)$ and $L_k=L_k(n,d)$ are deterministic and depend only on $n$ and $d$, by Definition \ref{def:error_control}, \eqref{def:usoleq1} reads \be\la{04041712h}\lim_{\substack{n\to\infty\\�\eta\to 0}}\sup_{\ka\le d\le \eta\log n}\sup_{k\in [n^{1-\eps}]}\lf|\f{\Del_k(n,d)}{L_k(n,d)}-1\ri|=0\,.\ee 
If it were not the case, there would be an infinite set $I$ of positive integers and some sequences $(\eta_n)_{n\in I}, (d_n)_{n\in I}, (k_n)_{n\in I}$ satisfying $\ka\le d_n\le \eta_n \log n$, $1\le k_n \le n^{1-\eps}$ and   $\eta_n\to 0$ as $n\in I$ tends to infinity, 
\st \be\la{441714h22}\lf|\f{\Del_{k_n}(n,d_n)}{L_k(n,d_n)}-1\ri|>c\ee  for some positive constant $c$. 
Let us drop the index $n$ from the notation. 
 One first verifies that $\Delta_k/d \to \infty$ as $n \in I$ grows 
 (by a simple argument by contradiction using \eqref{def:usoleq0}). Then, introduce $\vfi>0$ \st 
$\Delta_k=\vfi L_k.$ By \eqre{f:est1}, we have
\begin{align*}
 \log (n/k)&=f \lf(\Delta_k\ri)\\ &\sim 
\Delta_k\log(\Delta_k/d)\\ &=\vfi \f{\log (n/k)}{\log((\log n)/d)}\pb{\log(\log (n/k)/d)+\log\vfi -\log \log((\log n)/d)}\,.
\end{align*}
By assumption,  $\eps \log n \leq  \log (n/k)  \leq \log n$  so that
 \be\la{1291112}\ff{\vfi}\sim 1+\f{\log \vfi}{\log((\log n)/d)}+o(1)\,.\ee
On easily deduces from \eqre{1291112} that 
 $\vfi$ is bounded away from $0$ and $\infty$,   and then that $\vfi$ tends to one, which contradicts \eqre{441714h22}. Thus \eqref{04041712h}, hence also \eqre{def:usoleq1}, are  true.  \epr

 \beg{lem}\la{LCDBMLEVBIS}
Suppose that $n$ is large enough and $\eta$ small enough, and that $d$ satisfies condition \eqref{eq:ordersA}, so that $\Delta_1$ is well defined (see Lemma \ref{LCDBMLEV}). Let $q_1, \dots, q_n > 0$ satisfy $d = \sum_i q_i$, and let $X$ be a random variable with law $\BIN(q_1, \ldots , q_n)$.
Suppose that
$q_{\max} \deq \max_i q_i \leq d / (\log n)^{5/2}$. Then  for any  $u \leq \Delta_1$ and $x$ such that $x^2\leq u$ and $u+x \geq 2d $ is integer,  
\be\la{AsexpTail}
\p(X\ge u+x)=\PAR{ 1+ \delta} \PAR{\frac{u}{d}}^{-x}\me^{  -f(u)}\,.
\ee 
where $\delta = O \PAR{  (d   + x^2 )/u  +    q_{\max} u^{ 5 /2} / d  }$.
\end{lem}

\bpr
 By Theorem \ref{th:tail} below, $\p(X\ge u +x)=\PAR{ 1+ \delta} \p( Y =   u +x)$ where  $Y$ has Poisson distribution with mean $d$ and $\delta = O \PAR{  d /u   +    q_{\max} u ^{ 5 /2} / d  }$. Now, 
by \eqref{eq:fpoi}, we have 
 \be
  \p(Y =   u +x)  =   \me^{-f( u +x ) + O (1 / u)}\,.
 \ee
 Then, the estimate \eqre{f:est2}  allows to conclude.\epr

 We are now ready to prove Proposition \ref{propo_position_degree_max}.

 \begin{proof}[Proof of Proposition \ref{propo_position_degree_max}] Let $k \in [n^{1-\eps}]$. It is sufficient to prove that \whp we  have  $D_k ^\da \leq ( 1+ \eps) L_k$. Indeed, the inversion of \whp and {\it for all $k \in [n^{1-\eps}]$} is straightforward using $k = n^{1 - \ell \eps}$ for each $\ell \in [1/\epsilon]$, as all error terms $o(1)$ in what follows are terms tending to zero uniformly in $k \in [n^{1-\eps}]$ as $n\to\infty$ and $\eta\to 0$. 
 
 We note that 
$D^{\da}_k \geq t$ is equivalent to $\# \mc{V}_{\ge t} \geq k$ (the set $\mc{V}_{\ge t} $ has been defined in \eqref{def:usoleq0V}).  By \eqref{f:est2}, we have 
\begin{equation}\label{eq:deltakx}
  n \me^{-f( \Delta_k + x)} =  k  (  1+ o(1)) ( \Delta_k / d )^{-x}\,.
\end{equation}
 Thus \eqref{eq:deltakx} applied to $x = \lfloor \Delta_k \rfloor  +2 - \Delta_k $ implies that  $n\me^{-f(  \lfloor \Delta_k \rfloor  +2 )} = o(k)$. By \eqref{AsexpTail}  in  Lemma \ref{LCDBMLEVBIS}, it follows that $\E \#\mc{V}_{\ge t}=o(k) $ if $t =  \lfloor \Delta_k \rfloor  +2$. It remains to use \eqref{def:usoleq1} and Markov's inequality. 
\epr
 
Our proof of Proposition  \ref{27516} will require a sharp bound on the variance of $\#\mc{V}_{\ge t} $. 

\beg{lem}\la{estimate_variance}   Let  $D_i$ denote the degree of the vertex $i$ in the graph $G$. Then any integer $t\ge 0$, $$
\Var (\#\mc{V}_{\ge t} )\le\E \#\mc{V}_{\ge t}  +3d n \max_i\p(D_i\ge t-1)^2 \,.$$
\en{lem}

\bpr For ease of notation, we set $q_i := \p(D_i\ge t)$. Since $\#\mc{V}_{\ge t} = \sum_{i=1}^n \one_{D_i \geq t}$, we have $\E \#\mc{V}_{\ge t}   = \sum_{i=1}^n q_i$ and   \begin{align*}
  \Var(\#\mc{V}_{\ge t} ) 
  &=\sum_{i,j}\Cov(\one_{D_i\ge t}, \one_{D_j\ge t})\\
  &=\sum_{i}q_i(1-q_i)+\sum_{i\ne j}\Cov(\one_{D_i\ge t}, \one_{D_j\ge t})\,.  \end{align*}
Hence  it suffices to prove   that for $i\ne j$,  \bes\la{66161}\Cov(\one_{D_i\ge t}, \one_{D_j\ge t})\le  3 p_{ij} \max_i\p(D_i\ge t-1)^2 \,.\ees
 Let us fix $i\ne j$. We have $D_i=\sum_k A_{ik}$ and $D_j=\sum_{k}A_{jk}$. We  introduce the events 
 \begin{align*} & E \deq \lf\{\sum_{k\ne j} A_{ik} \ge t, A_{ij} = 0  \ri\}\,,\qquad E' \deq \lf\{\sum_{k} A_{ik} \ge t \ri\} \,,\\
& F \deq \lf\{\sum_{k\ne i} A_{jk} \ge t , A_{ij} = 0 \ri\}\,, \qquad
F' \deq \lf\{\sum_{k} A_{jk} \ge t\ri\}\,.
\end{align*}
Then $E\subset E'$, $F\subset F'$ and $(E'\backslash E)  \cap F =(F'\backslash F)  \cap E = \emptyset$, the latter follows from $E' \bck E = \{  \sum_{k\ne j} A_{ik} \ge t-1 , A_{ij}  =1 \} $ (and similarly for $F' \bck F$).  Thus by Lemma \ref{lem:covBernoulli} and the independence of the events $\{\sum_{k\ne j} A_{ik} \geq s \}$ and $\{\sum_{k\ne i} A_{jk} \geq s'\}$
\begin{align*}
\Cov(\one_{D_i\ge t}, \one_{D_j\ge t})&=\Cov(\one_{E'}, \one_{F'})\\ &\le  \p\pb{(E' \bck E) \cap (F'\bck F)} +  \p(F'\bck F)  \p(E) +  \p(E'\bck E) \p (F)\\ &\le 3  p_{ij} \p\pbb{\sum_{k\ne j} A_{ik} \ge t-1} \p\pbb{\sum_{k\ne i} A_{jk} \ge t-1}\,,
\end{align*}
which allows to conclude. \epr

 We are ready to prove Proposition  \ref{27516}. 
 
 \begin{proof}[Proof of Proposition \ref{27516}]
First we remark that the inversion of \whp and {\it for all  for all integers $t \in [\eps \Delta_1, \Delta_1 - \eps]$} for (b) or \emph{for all $k \in [n^{1 - \epsilon}]$} for (a) can be treated as in the proof of Proposition \ref{propo_position_degree_max}.

\begin{itemize}
\item[(b)]
By \eqref{AsexpTail}  in  Lemma \ref{LCDBMLEVBIS}, if $t \geq \Delta_1 + \eps$, $ \p ( \#\mc{V}_{\ge t} \geq 1) \le \E \#\mc{V}_{\ge t} \leq (1 + \eps) ( d / \Delta_1)^{\eps}$ tends to $0$.  Moreover, in the regime $\eps \Delta_1 \leq t \leq \Delta_1 - \eps$, as $n\me^{-f(t)}$ goes to infinity, to prove the left-hand side of \eqre{estimate:cardVka}, by Markov's inequality it suffices to prove  that $\E \#\mc{V}_{\ge t} = n\me^{-f(t)}(1+o(1))$ and $\Var ( \#\mc{V}_{\ge t} ) = n\me^{-f(t)}(1+o(1))$, which follows directly from \eqref{AsexpTail}  in  Lemma \ref{LCDBMLEV} and  Lemma \ref{estimate_variance}. 
 
It remains to prove that $  \# \mc{V}_{= t}  \sim n\me^{-f(t)}$ when $\eps \Delta_1 \leq t \leq \Delta_1 - \eps$. We note that $$\#\mc{V}_{= t}= \#\mc{V}_{\ge t}-\#\mc{V}_{\ge t+1}\,.$$
From what precedes, it suffices to check that   $\# \mc{V}_{t+1}=o(n\me^{-f(t)}) $. The latter is a consequence of  \eqref{AsexpTail}  in  Lemma \ref{LCDBMLEV} which implies that $\E \# \mc{V}_{t+1} \leq (1 + o(1)) ( d /  t )  n\me^{-f(t)}$. 

\item[(a)]
We note that for any $k,t$, the claim 
$D^{\da}_k \geq t$ is equivalent to $\# \mc{V}_{\ge t} \geq k$. Thus \eqref{eq:deltakx} applied to $x = \lfloor \Delta_k \rfloor  -1  - \Delta_k $ and (b) imply that with high probability $D^{\da}_k \geq \lfloor \Delta_k \rfloor  -1$. Similarly,  \eqref{eq:deltakx} applied to $x = \lfloor \Delta_k \rfloor  +2 - \Delta_k $ and (b) imply with high probability  $D^{\da}_k \leq \lfloor \Delta_k \rfloor  +1$. Moreover, if $\lfloor \Delta_k \rfloor  + 1 -  \Delta_k \geq   \eps$, then with high probability $D^{\da}_k \leq \lfloor \Delta_k \rfloor $, while if $\lfloor \Delta_k \rfloor  -  \Delta_k \leq   -  \eps$ then  with high probability $D^{\da}_k \geq \lfloor \Delta_k \rfloor $.  Note that either $\lfloor \Delta_k \rfloor  -  \Delta_k \leq  - \eps$ or $\lfloor \Delta_k \rfloor   +1   -  \Delta_k \geq  \eps$ holds when $\eps = 1/2$.  \qedhere
\end{itemize}
\epr

\subsection{Proof of Theorem \ref{thalphall12}}

First it is easy to see,  by Weyl's inequality, that we may assume without loss of generality that $p_{ii} = 0$ for all $i \in [n]$.  As pointed in introduction, our strategy is to describe the graph spanned by the vertices of high degree. We start with a deviation inequality on the degrees. Define
\begin{equation} \label{def_h}
h(x) \;\deq\; (1+x) \log (1+ x) - x\,.
\end{equation}

\begin{lem}\label{le:deg}
For distinct $i_1, \ldots , i_k \in [n]$ and $t \geq 0$ we have
\begin{equation}\label{eq:devdeg}
\p \left( \sum_{\ell=1}^k D_{i_\ell} \geq  k ( d +  t)  \right) \leq \exp \left( - k d h \left( \frac t   d \right) + k^2 \pma \left(\frac  t  d +
 1 \right)^2 \right)\,.
\end{equation}
In particular, if $d \ll t$ and $k t  / \log (t / d) \ll d^2 / \pma$, we have 
\begin{equation}\label{eq:devdeg0}
\p \left( \sum_{\ell=1}^k  D_{i_\ell} \geq    k t \right) \leq \exp \left( - ( 1+ o (1))  k t \log \left( \frac t   d \right) \right)\,. 
\end{equation}
\end{lem}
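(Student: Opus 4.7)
The key point is that $X := \sum_{\ell=1}^k D_{i_\ell}$ is almost but not exactly a sum of independent Bernoullis: writing $I := \{i_1, \ldots, i_k\}$, each edge with both endpoints in $I$ is counted twice in $X$, while each edge from $I$ to $[n] \setminus I$ is counted once. I would therefore split $X = S_1 + 2 S_2$, where
\bes
S_1 := \sum_{\ell=1}^k \sum_{j \notin I} A_{i_\ell j}, \qquad S_2 := \sum_{1 \leq \ell < \ell' \leq k} A_{i_\ell i_{\ell'}},
\ees
noting that all Bernoullis appearing in $S_1$ and $S_2$ are mutually independent. Setting $m_1 := \E S_1$ and $m_2 := \E S_2$, we have $m_1 + 2 m_2 = \sum_\ell d_{i_\ell} \leq k d$ and $m_2 \leq \binom{k}{2} \pma$.

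For $\lambda > 0$, independence together with $1 + x \leq e^x$ yields
\bes
\log \E[e^{\lambda X}] \leq (e^\lambda - 1) m_1 + (e^{2\lambda} - 1) m_2.
\ees
The elementary identity $e^{2\lambda} - 1 = 2(e^\lambda - 1) + (e^\lambda - 1)^2$ recombines the linear part and isolates the correction due to double counting:
\bes
\log \E[e^{\lambda X}] \leq (e^\lambda - 1)(m_1 + 2 m_2) + (e^\lambda - 1)^2 m_2 \leq (e^\lambda - 1)\, k d + (e^\lambda - 1)^2 m_2.
\ees
Markov's inequality with the optimising choice $\lambda = \log(1 + t/d)$, so that $e^\lambda - 1 = t/d$, then gives
\bes
\p\bigl( X \geq k(d + t) \bigr) \leq \exp\Bigl( -k(d+t) \log(1 + t/d) + k t + (t/d)^2 m_2 \Bigr).
\ees
The first two terms in the exponential combine into $-k d\, h(t/d)$ by the definition of $h$, while the last satisfies $(t/d)^2 m_2 \leq k^2 \pma (t/d)^2/2 \leq k^2 \pma (t/d + 1)^2$, which proves \eqref{eq:devdeg}.

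For \eqref{eq:devdeg0} I would apply \eqref{eq:devdeg} with $t$ replaced by $t - d$ (positive for large $n$ under $d \ll t$). A direct computation gives
\bes
k d\, h\bigl((t-d)/d\bigr) = k t \log(t/d) - k(t-d) \sim k t \log(t/d),
\ees
since $k (t-d) = o(k t \log(t/d))$ once $\log(t/d) \to \infty$. The residual error $k^2 \pma \bigl((t-d)/d + 1\bigr)^2 = k^2 \pma (t/d)^2$ is $o(k t \log(t/d))$ exactly under the assumption $k t / \log(t/d) \ll d^2 / \pma$, yielding the claim. The only delicate point in this argument is the double counting of edges inside $I$, and it is handled transparently by the identity $e^{2\lambda} - 1 = 2(e^\lambda - 1) + (e^\lambda - 1)^2$; everything else is a standard Chernoff computation.
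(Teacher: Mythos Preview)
Your proof is correct and follows essentially the same route as the paper: decompose $\sum_\ell D_{i_\ell}$ into edges internal to $I$ (counted twice) and edges leaving $I$, apply the exponential Markov inequality with $\lambda=\log(1+t/d)$, and then read off \eqref{eq:devdeg0} from $h(x)\sim x\log x$. The only cosmetic difference is that the paper works with the centred moment generating function and bounds the internal contribution via $\phi(2\lambda)\le e^{2\lambda}$, whereas your identity $e^{2\lambda}-1=2(e^\lambda-1)+(e^\lambda-1)^2$ isolates the same correction term slightly more cleanly.
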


\bpr
We have
$$
\p \left( \sum_{\ell=1}^k D_{i_\ell} \geq  k ( d + t)  \right) \leq \p \left( \sum_{\ell=1}^k D_{i_\ell}  - \E D_{i_\ell} \geq  k t\right)\,. 
$$
Now, 
$$
\sum_{\ell=1}^k D_{i_\ell} =2  \sum_{\ell=1}^k \sum_{j = \ell +1}^{k}A_{i_\ell i_{j}} +  \sum_{\ell=1}^k \sum_{ j \notin \{i_1 , \ldots , i_k \} }A_{i_\ell j} \eqd 2 S_1 + S_2\,,
$$
where $S_1$ and $S_2$ are independent. From Chernov's bound, for any $\lambda \geq 0$, 
\begin{equation*}
 \p \left( \sum_{\ell=1}^k D_{i_\ell} - \E D_{i_\ell}  \geq    k t \right) \leq \exp ( - \lambda k t + \alpha \phi( \lambda) +  \beta \phi ( 2 \lambda) ),
\end{equation*}
where $\phi( \lambda) = \me^{\lambda} - \lambda - 1$, $$
\alpha =  \sum_{\ell=1}^k \sum_{ j \notin \{i_1 , \ldots , i_k \} }p_{i_\ell j} 
\AND 
\beta  = 2 \sum_{\ell=1}^k  \sum_{j = \ell +1}^{k} p_{i_\ell i_j} = \sum_{\ell=1}^k  \sum_{j = 1}^{k} p_{i_\ell i_j}\,.
$$
By hypothesis, $\alpha + \beta \leq k d$ and $\beta \leq k^2  \pma$. We take $\lambda = \log ( t / d + 1)$ and use that $\phi( 2\lambda) \leq e^{2\lambda}$, we arrive at \eqref{eq:devdeg}. The second claim \eqref{eq:devdeg0} is an immediate consequence of the fact that the function $h$ from \eqref{def_h} satisfies  $h (x )  \sim x \log x$ when $x$ goes to infinity. 
\epr

\begin{lem}\label{le:nena}
There exists a constant $c >0$ such that the following holds.
Let $\eps \in (0,1)$ and $t = \eps L_1$.  For $S \subset \{1 , \ldots , n\}$,  let $\cN(S) = \{ j : A_{ij} = 1 \hbox{ for some } i \in S \}$ denote the set of neighbours of elements in $S $.  Then, under \eqref{eq:ordersA}, \whp for any $i \in \mc{V}_{\geq t}$ we have
\begin{equation*}
\# \qb{\cN(i) \cap \left( \mc{V}_{\geq t}   \cup \cN(\mc{V}_{\geq t} \backslash \{ i \} )\right)}  \leq c / \eps\,.
\end{equation*}
\end{lem}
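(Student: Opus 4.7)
Let $m := \lceil c/\eps \rceil$ with $c$ a large absolute constant to be chosen. The plan is to show that the event $\mc{E} := \{\exists\, i \in \mc{V}_{\geq t} \ste \abs{B_i} \geq m\}$, where $B_i := \cN(i) \cap (\mc{V}_{\geq t} \cup \cN(\mc{V}_{\geq t} \setminus \{i\}))$, fails \whp by a union bound over three deterministic local configurations obtained by combinatorial pigeonholing. Split $B_i = B_i^{(1)} \cup B_i^{(2)}$, where $B_i^{(1)} := \cN(i) \cap \mc{V}_{\geq t}$ and $B_i^{(2)}$ is the remainder; then at least one has cardinality $\geq m/2$. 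For each $j \in B_i^{(2)}$ fix a witness $k(j) \in \mc{V}_{\geq t} \setminus \{i\}$ with $k(j) \sim j$, and set $a := \lceil 2/\eps \rceil$. By pigeonhole either the image $\{k(j) \ste j \in B_i^{(2)}\}$ has cardinality $\geq a$, or some $k^*$ is the witness of at least $r := \lceil m/(2a) \rceil$ of the $j$'s. Hence $\mc{E}$ is contained in the union of:
\emph{(A)} there exist distinct $i, j_1, \ldots, j_{m/2} \in \mc{V}_{\geq t}$ with $A_{i j_\ell} = 1$ for all $\ell$;
\emph{(B1)} there exist distinct $i, k_1, \ldots, k_a \in \mc{V}_{\geq t}$ and further distinct $j_1, \ldots, j_a$ with $A_{i j_\ell} = A_{k_\ell j_\ell} = 1$ for all $\ell$;
\emph{(B2)} there exist distinct $i, k^* \in \mc{V}_{\geq t}$ and distinct $j_1, \ldots, j_r$ with $A_{i j_\ell} = A_{k^* j_\ell} = 1$ for all $\ell$.

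For each configuration I enumerate the "core" vertices (factor $\leq n^{\#\text{vertices}}$), pay $\leq \pma^{\#\text{edges}}$ for the required edges by independence, and apply Lemma \ref{le:deg} to the sum of the specified high-degree vertices' degrees. Since each core vertex has $O(1/\eps)$ reserved edges whereas $t = \eps L_1 \to \infty$, the perturbation from conditioning on reserved edges is absorbed into $(1+o(1))$, and the degree tail contributes $\me^{-(1+o(1)) q \, t \log(t/d)}$, where $q$ is the number of high-degree constraints. This yields respective bounds
\begin{equation*}
n^{m/2 + 1} \pma^{m/2} \me^{-(1+o(1))(m/2+1) t \log(t/d)}\,,\quad
n^{2a+1} \pma^{2a} \me^{-(1+o(1))(a+1) t \log(t/d)}\,,\quad
n^{r+2} \pma^{2r} \me^{-(1+o(1))\cdot 2\, t \log(t/d)}
\end{equation*}
for (A), (B1), (B2) respectively. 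The definition of $L_1$ together with $d \leq \eta \log n$ gives $\log((\log n)/d) \to \infty$ as $\eta \to 0$ and hence $\log(t/d) = \log(\eps L_1/d) \sim \log((\log n)/d)$, so $t \log(t/d) \sim \eps \log n$.

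Plugging $\log \pma \leq -(1-\eta) \log n$ into the three bounds, the coefficient of $\log n$ in the logarithm is $1 + (m/2)\eta - (1+o(1))(m/2+1)\eps$ in (A), $1 + 2 a \eta - (1+o(1))(a+1)\eps$ in (B1), and $2 - r(1 - 2\eta) - (1+o(1))\cdot 2\eps$ in (B2); each is bounded above by a strictly negative constant uniformly as $\eta \to 0$ as soon as $m \geq 2/\eps$, $a \geq 1/\eps$, and $r \geq 3$. With $m = c/\eps$, $a = \lceil 2/\eps \rceil$ and $r \geq c/4$, all three conditions hold for $c \geq 12$. I expect the main obstacle to be (B1): its $2a$ bridge edges contribute only $\pma^{2a} = n^{-2a(1-\eta)}$, which against the $n^{2a+1}$ vertex-enumeration factor leaves a residual $n^{1+O(a\eta)}$ that must be killed by a tail $\me^{-(a+1)\eps(1+o(1))\log n}$ involving merely $a+1$ degrees; crossing the threshold $(a+1)\eps > 1$ forces $a \gtrsim 1/\eps$, and the pigeonhole step then propagates $m \gtrsim a \gtrsim 1/\eps$, pinning down the $c/\eps$ scaling in the statement. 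Verifying the hypotheses of Lemma \ref{le:deg} is routine: $d \ll t$ follows from $L_1/d \to \infty$ as $\eta \to 0$, while $q\, t/\log(t/d) = O(\log n) \ll \ka^2 n^{1-\eta} \leq d^2/\pma$.
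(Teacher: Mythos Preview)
Your proof is correct and follows the same overall strategy as the paper: a union bound over local edge configurations around high-degree vertices, factoring out the reserved edge probabilities and bounding the remaining joint degree tail via Lemma \ref{le:deg}. Your case (A) is exactly the paper's bound $P(1)$.

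For the ``neighbour-of-high-degree'' part the paper does not dichotomize: it directly sums over all possible numbers $s \in \{1,\dots,k\}$ of distinct witnesses and all surjections $\tau:[k]\to[s]$, using the interpolating estimate $\sum_{i_1,\dots,i_s}\prod_\ell p_{j_\ell i_{\tau(\ell)}} \leq d^s \pma^{k-s}$, which collapses to
\[
P(2) \;\leq\; n d^{k}\, \me^{-(1+o(1))t\log(t/d)}\bigl(d\,\me^{-(1+o(1))t\log(t/d)} + k\pma\bigr)^{k}\,.
\]
Your pigeonhole split into (B1) (at least $a\sim 2/\eps$ distinct witnesses) and (B2) (one witness sharing at least $r\sim c/4$ bridges with $i$) isolates the two extremal regimes of that sum and bounds each separately; this is a legitimate and arguably more transparent alternative, at the price of introducing one extra parameter. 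You also use the cruder $n^{\#\text{vertices}}\pma^{\#\text{edges}}$ in place of the paper's sharper row-sum bounds $\sum\prod p \leq d^{\#}$; this is harmless here since $\pma \leq n^{-1+\eta}$ makes $n\pma \leq n^{\eta}$ negligible against any positive power of $n$.
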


\bpr
Fix an integer $k$. 
 Let $P(1)$ be the probability that there exists a  vertex of $\mc{V}_{\geq t}$ which is neighbour to at least   $k$ other elements of  $\mc{V}_{\geq t}$. We have 
\begin{align*}
P(1) &\leq  \sum_{\substack{ i_0, \ldots , i_k \\ \trm{distinct}}} \p \PAR{ D_{i_0} \geq t \; \hbox{ and } \; 
 \forall \ell \in \{ 0, \ldots , k \}   : D_{i_\ell} \geq t , A_{i_0 i_\ell}   = 1} \\
& \leq  \ \sum_{\substack{ i_0, \ldots , i_k \\ \trm{distinct}}}  \p \PAR{D_{i_0} \geq t - k }  \p \PAR{ \forall \ell \in \{ 1, \ldots , k \}   : D_{i_\ell} \geq t -1} \prod_{\ell = 1}^{k } p_{i_0 i_\ell} \end{align*}
Since for any fixed $i_0$ we have \be\la{23101612h}\sum_{\substack{ i_1, \ldots , i_k \\ \trm{distinct}}}\prod_{\ell = 1}^{k } p_{i_0 i_\ell}\le d^k \ee we deduce that \begin{align*}
P(1) 
& \leq n d^k \max_i  \p \PAR{ D_{i} \geq t - k }  \max_{\substack{ i_1, \ldots , i_k \\ \trm{distinct}}}\p \PAR{ \forall \ell \in \{ 1, \ldots , k  \}   : D_{i_\ell} \geq t -1} \\
& \leq n d^k \exp \left( - (1+ o(1)) (k+1) t \log \frac{ t}{d}\right)
\end{align*}
if $t \gg d $ and $t\pma\ll d^2\log(t/d)$ (from \eqref{eq:devdeg0}). For $t = \eps L_1 \gg d$ and $k$   fixed   such that $k +1 >  a / \eps$ with $a >1$. We find 
$$
P(1) \leq  n^{ 1  - a (1 + o(1)) }   = o(1)\,. 
$$ 
Similarly, let $P(2)$ be the probability that  there exists a vertex $i \in \mc{V}_{\geq t}$ which is neighbour  to at least $k$ elements of  $\cN ( \mc{V}_{\geq t} \backslash\{ i \} )$. Then
\begin{align*}
P(2) \leq & \sum_{s = 1}^{k} \sum_\tau \sum_{\substack{ i_0, \ldots , i_s , j_1, \ldots , j_k \\ \trm{distinct}}} \p \PAR{ D_{i_0}, \ld, D_{i_s} \geq t \hbox{ and } \;  \forall \ell= 1, \ldots , k  :   A_{i_0 j_{\ell}} = A_{ j_{\ell}i_{\tau(\ell)}}   = 1 }\,,
\end{align*}
where the second sum is over all surjective maps $\tau : \{ 1, \ldots , k \} \to \{ 1, \ldots , s \}$. We deduce that 
\begin{align*}
P(2) & \leq   \sum_{s = 1}^{k} \sum_\tau \sum_{\substack{ i_0, \ldots , i_s , j_1, \ldots , j_k \\ \trm{distinct}}} \p \PAR{ D_{i_0} \geq t  - s } \p \PAR{ \forall r \in \{ 1, \ldots , s  \}   :D_{i_r} \geq t -\#\tau^{-1} (\{r\})} \prod_{\ell= 1 }^k p_{i_0 j_\ell}  p_{ j_\ell i_{\tau(\ell)}} \,. \end{align*}
Now, note that for any fixed surjective map $\tau : [k] \to [s]$, \bgt\ite  for any $i_1, \ld, i_s$, we have $$\lf\{ \forall r \in \{ 1, \ldots , s  \}   :D_{i_r} \geq t -\#\tau^{-1} (\{r\})\ri\}\subset  \lf\{\sum_{r=1}^s D_{i_r} \ge   st -k\ri\}\,;  $$\ite 
for any fixed $j_1, \ld, j_k$, we have,  as in \eqre{23101612h},   $$ \sum_{\substack{  i_1, \ldots , i_s \\ \trm{distinct}}} \prod_{\ell= 1 }^k   p_{ j_\ell i_{\tau(\ell)}} \le d^s\pma^{k-s}\,.$$
\ent We deduce, using \eqre{23101612h} and \eqref{eq:devdeg0} again, that 
\begin{align*}
P(2) 
& \leq  \sum_{s = 1}^{k}  \sum_\tau n d^{k+s}  \pma^{k-s}\max_i  \p \PAR{ D_{i} \geq t - k }  \max_{\substack{ i_1, \ldots , i_s \\ \trm{distinct}}}\p \PAR{  \sum_{ \ell = 1}^s  D_{i_\ell} \geq  s t -k } \\
& \leq  n d^k  \sum_{s = 1}^{k}  {k \choose s} k^{k-s} d^s \pma^{k-s} \me^{ - (1+ o(1)) (s+1) t \log \left( \frac{ t}{d} \right) }\,.
\end{align*}
 The $o(1)$ is uniform over $1 \leq s \leq k$. Hence, 
$$
P(2) \leq n d^{k}   \me ^{  - (1+ o(1)) t \log \left( \frac{ t}{d} \right) } \left(d \me ^{  - (1+ o(1))  t \log \left( \frac{ t}{d} \right) }  + k \pma    \right)^k \,. 
$$
As above for $t = \eps L_1 \gg d$ and $k$ a fixed integer such that $k +1 >  a / \eps$ with $a >1$. We find 
$
P(2) = o(1). 
$ This concludes the proof of the first claim of the lemma. 
\epr

\begin{Def} \label{def:star_graph}
A \emph{star graph with central degree $D$} is the graph with vertex set $\{0,1,\dots, D\}$ and edge set $\{\{0,1\}, \{0,2\}, \dots, \{0,D\}\}$.
\end{Def}

We may now prove Theorem \ref{thalphall12}.

  \begin{proof}[Proof of Theorem \ref{thalphall12}] Let $0 < \delta <  \eps  $ and set $t: = \delta L_1$. By Proposition \ref{propo_position_degree_max}, \whp for any $k \in [n^{1-\eps}]$, 
  $$
  D^\da_k \geq t\, .
  $$
Let $G_\star$ be the graph  obtained from $G$ as follows. The vertex set of $G_\star$ is $[n]$. The edge set of $G_\star$ is the set of edges $\{ i,j \}$ of $G$ (i.e.\ $A_{ij} = 1$)  such that $i \in \mc{V}_{\geq t}$ and $j \notin  \mc{V}_{\geq t}  \cup \cN(\mc{V}_{\geq t} \backslash \{ i \} ) $  (where the notation $\cN(\cdot)$ was introduced in Lemma \ref{le:nena}). By construction, $G_\star$ is a disjoint union of isolated vertices and of star graphs with central degrees $D_i^\star$, $i \in \mc{V}_{\geq t}$. By Lemma \ref{le:nena}, \whpv the central degrees of the stars satisfy $D_i - c / \delta \leq D_i^\star \leq  D_i$. Let $A_\star$ be the adjacency matrix of $G_\star$ and  let $A' = A - A_\star$ be the adjacency matrix of $G \backslash G_\star$.

By Lemma \ref{StarGraph}, the nonzero eigenvalues of $A_\star$ are $\pm\sqrt{D^\star_i}$, $i \in \mc{V}_{\geq t}$.  From what precedes, \whp for all $i \in \mc{V}_{\geq t}$, 
\be\la{eq231019h30}
\left| \sqrt{D^\star_i} - \sqrt{D_i}  \right| = \f{D_i-D_i^\star}{\sqrt{D_i} + \sqrt{D_i^\star}}\leq \frac{c }{\delta \sqrt t}\,. 
\ee
 for $c$ the universal constant of Lemma \ref{le:nena}. 
 
 Besides,   
 \whp the maximal degree in $G \backslash G_\star$ is bounded by $\max( t  ,  c / \delta)$. Indeed, let $i\in [n]$ be a vertex. If the degree of $i$ in $G$ is $< t$, then there is nothing to prove (as the degree of $i$ in  $G\bck G_\star$ is bounded by its degree in $G$), whereas if $D_i\ge  t$, then the degree of $i$ in  $G\bck G_\star$ is $D_i-D_i^\star\le c/\delta$. 
 
By Proposition \ref{27516}, we know that \whpv the cardinal number of $ \mc{V}_{\geq t}$ is at most  $n^{1-\delta + o(1)}\le 10/\pma$, hence 
  by Theorem \ref{LeVer} of Le, Levina, and Vershynin, \whp
\be\la{eq8juin19h30}\|A'-\E A\|\le C((n\pma)^{1/2}+ (\delta L_1)^{1/2})\,.\ee

Then, one concludes thanks to Weyl's perturbation inequality (see e.g.\ \cite[Corollary III.2.6]{Bhatia}), noticing that the constants from \eqre{eq231019h30} and \eqre{eq8juin19h30} do not depend on the choice of $\delta>0$.
\epr

\section{Poisson tail aproximation} \label{sec:Poisson}

The following sharp tail asymptotic of  $\BIN( p_1, \ldots , p_n)$ is of independent interest. It is stronger than what can be deduced from Le Cam's inequality (see \cite{BHJBook}).
 \begin{Th}\label{th:tail}
Let $X$ with distribution $\BIN( p_1, \ldots , p_n)$, $d =p_1+\cd+p_n>0$ and $\pma = \max_i p_i \geq d / n$. Let $Y$ be a Poisson variable with mean $d$. There exists a universal constant $C >0$ such that for any integer $k$ satisfying  $ 2 d \leq  k \leq (  d  / \pma)^{2/5} / C$, we have
\begin{equation}\label{eq:gn20}
\ABS{ \frac{\p(X=k)}{\p ( Y = k) }  - 1 }  \leq C \frac{ \pma k^{\f 5 2}}{d} \,,
\end{equation}
and
\begin{equation}\label{eq:gn10}
\p(X > k) \leq  C \PAR{ \frac { d}{k}  +  \frac{ \pma k^{\frac 5 2}}{d}  }  \p ( X = k)\,.
\end{equation}
 \end{Th}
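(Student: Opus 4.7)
The plan is to prove the pointwise comparison \eqref{eq:gn20} first, by a direct computation with the generating function of elementary symmetric polynomials, and then deduce the tail bound \eqref{eq:gn10} by summing \eqref{eq:gn20} over $m > k$ up to a convenient cutoff and treating the far tail by a Chernoff estimate.

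For the pointwise step, I would set $q_i \deq p_i/(1-p_i)$, $D \deq \sum_i q_i$, and $S_r \deq \sum_i q_i^r$. From $\p(X=k) = \prod_j(1-p_j)\,e_k(q_1,\dots,q_n)$, with $e_k$ the $k$-th elementary symmetric polynomial, the exponential formula
\[
  \prod_i (1+q_i t) \;=\; e^{Dt}\, \exp\!\left(\sum_{r\geq 2}\frac{(-1)^{r-1}}{r}S_r t^r\right) \;=\; e^{Dt}\sum_{j\geq 0} k_j t^j
\]
gives $e_k(q)=\sum_{j\geq 0}k_j D^{k-j}/(k-j)!$. Hence
\[
  \frac{\p(X=k)}{\p(Y=k)} \;=\; \PAR{e^d\prod_j(1-p_j)}\cdot\PAR{\frac{D}{d}}^{\!k}\cdot \sum_{j\geq 0}k_j\frac{(k)_j}{D^j},
\]
with $(k)_j=k(k-1)\cdots(k-j+1)$. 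Bounding each factor via the elementary inequalities $s_r := \sum_i p_i^r \leq \pma^{r-1}d$ and $|S_r|\leq \pma^{r-1}D$ yields $e^d\prod_j(1-p_j)=1+O(\pma d)$, $(D/d)^k=1+O(k\pma)$, and, most delicately, $\sum_j k_j(k)_j/D^j=1+O(\pma k^2/d)$, via a partition-indexed bound on $|k_j|$ whose dominant contribution is the ``all pairs'' term. Combining the three factors and using $k\geq 2d$ to absorb the $\pma d$ and $k\pma$ terms into $\pma k^2/d\leq \pma k^{5/2}/d$ gives \eqref{eq:gn20}.

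For the tail estimate, I would split $\p(X>k)$ at $m=2k$. On the range $k<m\leq 2k$, the hypothesis $m\leq (d/\pma)^{2/5}/C$ still holds after possibly enlarging $C$, so \eqref{eq:gn20} applies and gives $\p(X=m)\leq 2\p(Y=m)$. Combined with the Poisson ratio identity $\p(Y=m+1)/\p(Y=m)=d/(m+1)\leq 1/2$ for $m\geq k\geq 2d$ and a further application of \eqref{eq:gn20} at $m=k$, this gives
\[
  \sum_{k<m\leq 2k}\p(X=m)\;\leq\; C\,\frac{d}{k}\,\p(X=k).
\]
For $m>2k$, a standard Chernoff bound $\p(X\geq 2k)\leq \exp\!\pb{-2k\log(2k/d)+2k-d}$ combined with Stirling's approximation to $\p(Y=k)$ yields $\p(X>2k)\leq e^{-ck}\p(X=k)$ for a universal constant $c>0$, which is easily absorbed into $C(d/k+\pma k^{5/2}/d)\p(X=k)$.

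The principal obstacle is the coefficient estimate for the main sum $\sum_j k_j(k)_j/D^j$: writing the exponential in $K(t)$ as a multiple series indexed by multiplicities $(a_r)_{r\geq 2}$ with $\sum_r r a_r=j$, one must show that the dominant contribution as $j$ varies is the diagonal term $a_2=j/2$, so that the $j$-th coefficient contributes $\lesssim (\pma k^2/d)^{j/2}/\pb{2^{j/2}(j/2)!}$ and the series sums geometrically under the hypothesis $k\leq (d/\pma)^{2/5}/C$. All the remaining estimates reduce to routine applications of the bounds $s_r,S_r\leq \pma^{r-1}d$ and Stirling.
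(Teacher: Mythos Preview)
Your approach is correct and takes a genuinely different route from the paper's. The paper proves \eqref{eq:gn20} by a Lindeberg-type replacement: it first establishes the analogue of the theorem for the homogeneous law $Z\sim\BIN(d/n,\ldots,d/n)$ by a direct comparison with the Poisson (Lemma~\ref{Lem:BinomialTail}), and then transports the result to the inhomogeneous $X$ via an iterative swapping argument (Lemma~\ref{le:lind}) that replaces pairs $(p_i,p_j)$ by $(d/n,\,p_i+p_j-d/n)$ one at a time, each swap incurring an error of order $(d\pma/n)\,e^{-dh((k-2-d)/d)}$; the tail bound \eqref{eq:gn10} is obtained by the same two-step scheme. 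Your generating-function identity $\prod_i(1+q_it)=e^{Dt}\exp\bigl(\sum_{r\geq 2}(-1)^{r-1}S_r t^r/r\bigr)$ bypasses the homogeneous intermediary entirely, and the partition expansion is controlled by the single parameter $k^2 q_{\max}/D\asymp \pma k^2/d$ (note that strictly you need $S_r\leq q_{\max}^{\,r-1}D$ with $q_{\max}=\pma/(1-\pma)$ rather than $\pma$, but this only affects constants). In fact your argument delivers the sharper error $O(\pma k^2/d)$ in \eqref{eq:gn20}, which would allow the larger range $k\lesssim (d/\pma)^{1/2}$; the paper's exponent $5/2$ comes from the factor $(k/d)^2$ produced by the shift $k\mapsto k-2$ in the Lindeberg comparison, combined with the $\sqrt{k}$ from Stirling. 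Your tail argument---split at $2k$, Poisson monotonicity below, Chernoff above---is also self-contained, whereas the paper reuses the Lindeberg tail comparison. The trade-off is that the Lindeberg scheme is modular and would adapt readily to other functionals of $X$, while your symmetric-function expansion is tailored to point probabilities but yields a tighter constant.
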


We first check that Theorem \ref{th:tail} holds for standard binomial variable.

 \beg{lem}[Tails of binomial laws]\la{Lem:BinomialTail}   Let $Z$ be distributed according to the binomial distribution with parameters $(n,d/n)$. There exists a universal constant $C >0$ such that   for any  integer $k$ with  $2d \leq k \leq \sqrt n / C $, 
\begin{equation}\label{eq:gn2}
\ABS{ \frac{\p(Z=k)}{\p ( Y = k) }  - 1 }  \leq \frac{ C k^2}{ n } \,,
\end{equation}
and
 \begin{equation}\label{eq:gn1}
\p(Z > k) \leq \frac {C d}{k}   \p ( Z = k)\,.
\end{equation}
\en{lem}
 
 \bpr We have $$
 \frac{\p(Z=k)}{\p ( Y = k) } 
 =\f{n!}{(n-k)!(n-d)^k}\lf(1-\f{d}{n}\ri)^n\me^d$$
 using $(n-k)^k\le \f{n!}{(n-k)!}\le n^k $, we get that 
 \be\la{279161} \PAR{ 1 - \f k n}^k\lf(1-\f{d}{n}\ri)^n\me^d\le\frac{\p(Z=k)}{\p ( Y = k) } 
 \le  \lf(\f{n}{n-d}\ri)^k \le \me^{kd/(n-d)}\,.\ee
Using $\log(1+x)\le x$, we get $\lf(\f{n}{n-d}\ri)^k\le \me^{kd/(n-d)}$. Then, it is easy to see that there is $C$ \st as soon as $d,k\le \sqrt{n}$ and $n\ge 2$, we have   $\me^{kd/(n-d)}\le 1+C\f{kd}{n}$, so that 
 \be\la{279164}\frac{\p(Z=k)}{\p ( Y = k) } 
  \le 1+C\f{kd}{n}\,.\ee
 For the lower bound, first note that there is $C$ \st  for any  $1\le k \leq n$, \be\la{279162}
1 - \f{ C k ^2} { n } \leq \PAR{ 1 - \f k n}^k \ee (indeed, it  comes down to prove that there is a constant $C$ \st for any $n\ge 1$, for $x\in (0, (Cn)^{-1/2})$, $\log(1-Cnx^2)\le nx\log(1-x)$, which is easily obtained thanks to the series expansion). Then note that there are $C,C'$ \st as soon as $d\le\sqrt{n}$ and $n\ge 2$,  \be\la{279163}\log\lf(1-\f{d}{n}\ri)+\f{d}{n}\ge -C\lf(\f{d}{n}\ri)^2\AND \me^{-nC(d/n)^2}  \ge 1-C'\f{d^2}{n}\,.\ee  From \eqre{279161}, \eqre{279162} and \eqre{279163}, we deduce that  \be\la{279165}\frac{\p(Z=k)}{\p ( Y = k) } 
  \ge \lf( 1-C\f{k^2}{n}\ri)\lf( 1-C'\f{d^2}{n}\ri).\ee
The claim \eqref{eq:gn2} 
then follows from \eqre{279164} and \eqre{279165}.

  Note that for any integer $j  \ge d$, $$\p(Z=j+1)=\p(Z=j)\f{(n-j)_+}{n-d} \, \f{d}{j+1}\le \f{d}{j+1}\p(Z=j)\,.$$
 We deduce that for $\eps \deq \f{d}{k}$,  $$\p(Z >  k)\le  \p(Z=k)\sum_{\ell\ge 1}\eps^\ell=\f{\eps}{1-\eps} \p(Z=k)\,.$$
 This concludes the proof.
 \epr

The classical Bennett's inequality gives a first tail bound for the distribution $\BIN( p_1, \ldots , p_n)$.

\beg{lem}[Half of Bennett's inequality]\label{Bennett} Let $X$ with distribution $\BIN( p_1, \ldots , p_n)$ and set  $d =p_1+\cd+p_n$.  Then for any $t\ge0$, we have   \begin{equation}\label{roy012}\p\PAR{X \ge d(1+t) }\le \exp\{-d h(t)\} 
\end{equation}
where $h$ was defined in \eqref{def_h}.
\en{lem}

 The next crucial lemma will use the Lindeberg's replacement principle to compare the distributions $\BIN( p_1, \ldots , p_n)$ and $\BIN(p, \ldots , p)$ with $p = \frac 1 n \sum_i p_i$.
 \begin{lem}[Comparison principle]\label{le:lind}
Let $P = \BIN(p_1, \ldots , p_n)$ and $Q = \BIN( d / n , \ldots , d / n )$ with $d  \deq  \sum_{i=1}^n p_i$. For any integer $ k$, we have 
$$
 |P(k) - Q(k) | \leq  2  d \pma   \me^{ - d h(( \frac{ k- 2 - d}{ d} )_+ )} \AND   |P(k,\infty) - Q(k,\infty) | \leq  d \pma   \me^{ - d h (( \frac{ k- 2 - d}{ d})_+ ) }  \,. 
$$
\end{lem}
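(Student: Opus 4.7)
My plan is to first derive the tail estimate $|P(k,\infty) - Q(k,\infty)| \leq d\,\pma\,e^{-d\,h((k-2-d)/d)_+}$ via a two-step Lindeberg replacement that fully exploits the cancellation $\sum_i(p_i - d/n) = 0$, and then to deduce the pointwise estimate by differencing adjacent tail values.

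I would introduce mutually independent Bernoullis $X_i \sim \BER(p_i)$ and $X_i' \sim \BER(d/n)$, and interpolate via $X^{(j)} := \sum_{i<j} X_i' + \sum_{i \geq j} X_i$, so that $X^{(1)} \sim P$ and $X^{(n+1)} \sim Q$. Setting $T_j := X^{(j)} - X_j = X^{(j+1)} - X_j'$ and conditioning on $T_j$, a one-line computation yields
\[
P(X^{(j)} \geq k) - P(X^{(j+1)} \geq k) \;=\; (p_j - d/n)\, P(T_j = k-1),
\]
hence, telescoping, $P(\geq k) - Q(\geq k) = \sum_j (p_j - d/n)\,P(T_j = k-1)$.

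Because $\sum_j (p_j - d/n) = 0$, I may subtract the $j$-independent reference $r^* := P(T^* = k-1)$, where $T^* \sim \BIN(d/n,\ldots,d/n)$ on $n-1$ coordinates. Applying the same single-swap identity a second time to the residual $P(T_j = k-1) - r^*$ (the two $(n-1)$-Bernoulli distributions differ only on indices $i > j$) produces a double-sum representation
\[
P(\geq k) - Q(\geq k) \;=\; \sum_{j < i} (p_j - d/n)(p_i - d/n)\, \Delta_{j,i},
\]
with $\Delta_{j,i}$ a discrete derivative of the form $P(U_{j,i} = k-2) - P(U_{j,i} = k-1)$ for some $(n-2)$-Bernoulli sum $U_{j,i}$ of mean at most $d$. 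Writing $\Delta_{j,i} = \Delta^* + (\Delta_{j,i} - \Delta^*)$ for a reference value $\Delta^*$, the main contribution uses the algebraic identity
\[
\sum_{j < i}(p_j - d/n)(p_i - d/n) \;=\; -\tfrac{1}{2}\sum_j (p_j - d/n)^2,
\]
a direct consequence of $\sum_j(p_j - d/n) = 0$. Its magnitude is controlled by $\tfrac{1}{2}\pma \sum_j |p_j - d/n| \leq d\,\pma$, since $|p_j - d/n|\leq\pma$ and $\sum_j|p_j - d/n| \leq 2d$. Bennett's inequality (Lemma \ref{Bennett}) applied to the reference sum $U^*$ gives $|\Delta^*| \leq P(U^* \geq k-2) \leq e^{-d\,h((k-2-d)/d)_+}$, yielding the tail bound; the lower-order corrections from $\Delta_{j,i} - \Delta^*$ are absorbed by one further Lindeberg iteration.

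The pointwise bound then follows from $P(k) - Q(k) = [P(\geq k) - Q(\geq k)] - [P(\geq k+1) - Q(\geq k+1)]$, the triangle inequality, and the monotonicity $e^{-d\,h((k-1-d)/d)_+} \leq e^{-d\,h((k-2-d)/d)_+}$, which produces the extra factor of $2$. The main obstacle is extracting the $\pma$ prefactor rather than merely $d$: this requires combining the first-order cancellation $\sum(p_i - d/n) = 0$ (which sets up the double sum) with the resulting quadratic identity $\sum_{j<i}(p_j-d/n)(p_i-d/n) = -\tfrac{1}{2}\sum(p_j - d/n)^2$ (which actually produces the $\pma$ via $\sum(p_j-d/n)^2 \leq 2d\,\pma$), together with careful control of the non-constant part of $\Delta_{j,i}$ so that the remainder stays genuinely lower order than the leading term.
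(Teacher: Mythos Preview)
Your approach differs from the paper's, and the sketch has a genuine gap in the handling of the remainder term.

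The paper does not replace one coordinate at a time. It swaps a \emph{pair} $(p_i,p_j)$ with $p_i<d/n<p_j$ to $(q_i,q_j)=(d/n,\,p_i+p_j-d/n)$, keeping $p_i+p_j$ fixed. Expanding $P(t)$ in the two parameters, the constant and first-order parts depend only on $p_i+p_j$ and hence cancel \emph{within each swap}; the change is exactly $(p_ip_j-q_iq_j)$ times a discrete second difference of the distribution with indices $i,j$ removed. Since one factor in each product is $\le d/n$ and the other $\le\pma$, each swap costs at most $(d\pma/n)\,\me^{-dh((k-2-d)/d)_+}$, and at most $n$ swaps (each fixes at least one coordinate to $d/n$) give the lemma directly. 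No remainder analysis is needed.

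In your scheme the first-order cancellation is global ($\sum_j(p_j-d/n)=0$), not local, so after two iterations the kernel $\Delta_{j,i}$ genuinely varies. The main term with $\Delta^*$ is fine, but the claim that the remainder $\sum_{j<i}(p_j-d/n)(p_i-d/n)(\Delta_{j,i}-\Delta^*)$ is ``absorbed by one further Lindeberg iteration'' is not justified. A further iteration only gives $|\Delta_{j,i}-\Delta^*|\le\big(\sum_l|p_l-d/n|\big)\,\me^{-dh(\cdot)}\le 2d\,\me^{-dh(\cdot)}$, and the absolute double sum $\sum_{j<i}|p_j-d/n||p_i-d/n|$ is of order $d^2$, not $d\pma$. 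Concretely, take $p_1=\cdots=p_{n/2}=2d/n$ and $p_{n/2+1}=\cdots=p_n=0$: then $\pma=2d/n$, the target is $(2d^2/n)\,\me^{-dh(\cdot)}$, whereas your remainder estimate is only $O(d^3)\,\me^{-dh(\cdot)}$. Pushing the cancellation further produces a series in the elementary symmetric polynomials $e_m(p_j-d/n)$, with each level shifting $k\mapsto k-1$ in the Bennett exponent; showing this is dominated by the $m=2$ term in full generality is substantially more work than ``one further iteration'' (and induction on $n$ does not close either, since the intermediate sums $U_{j,i}$ do not share a common mean). The paper's pairwise swap is precisely the device that makes the cancellation local and avoids this difficulty.
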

 
 \bpr
 Fix the integers $1 \leq i,j \leq n$. We write $f(t) = P(t)$ and $f^{ij} (t)$ is the probability of $t$ for the distribution $\BIN ( (p_l)_{l \ne \{i,j\}})$.  We have 
 \be\la{299167}
 f(t) = (1-p_i) (1 - p_j) f^{ij} (t) + p_i ( 1- p_i) f^{ij} (t-1) + p_j ( 1- p_j) f^{ij} (t-1) + p_i p_j f^{ij} (t-2)\,. 
 \ee
 As $(1-p_i) (1 - p_j)+p_i ( 1- p_i)+p_j ( 1- p_j)+p_i p_j=1$, we first deduce from \eqre{299167} that  for any $k\ge 0$, \be\la{289161} P(k,\infty) = \sum_{t\ge k} f(t)\ge\sum_{t\ge k}f^{ij}(t)\,. \ee
 We also deduce  from \eqre{299167} that  $f(t)$ is a polynomial of order $2$ in $(p_i,p_j)$. Interestingly, the term of order $1$ is symmetric and equal to 
 $$
 (p_i + p_j) (f^{ij} (t-1) - f^{ij} (t))\,. 
 $$
 The second order term is equal to 
 $$
 p_i p_j  (f^{ij} (t) - 2f^{ij} (t-1) + f^{ij} (t-2) )\,. 
 $$
  Now,  let $(q_1, \ldots , q_n)$ be such that $p_l = q_l$ for $k \ne \{ i, j \}$, $q_i+ q_j = p_i + p_j$. Hence, if $g(t)$ is the probability of $t$ for $\BIN(q_1, \ldots , q_n)$, we find
  $$
  f(t) - g(t) =  (p_i p_j  - q_i q_j)  (f^{ij} (t) - 2f^{ij} (t-1) + f^{ij} (t-2) )\,. 
  $$
  and 
  $$
  \sum_{ t\geq k} f(t) - g(t) = (p_i p_j  - q_i q_j)  (f^{ij} (k-2) - f^{ij} (k-1) ) \,.
  $$
  Assume that $p_i p_j $ and $q_i q_j$ are both bounded by $d\pma / n$. Then from the previous equations and from  \eqre{289161}, we deduce that
 $$
 | f(k) - g(k)|  \leq  2 \frac{d\pma}{n}  P( k - 2, \infty) \AND  \absbb{ \sum_{t \geq k} f(t) - g(t)}  \leq   \frac{d\pma}{n}  P( k - 2, \infty)\,.
  $$
   By Lemma \ref{Bennett}, we find that 
\begin{equation}\label{eq:lind}
   | f(k) - g(k)| \leq  2 \frac{d\pma}{n}    \me^{ - d h (( \frac{ k - 2 - d}{ d} )_+ )}  \AND  \absbb{ \sum_{t \geq k} f(t) - g(t)}  \leq   \frac{d\pma}{n}  \me^{ - d h(( \frac{ k - 2 - d}{ d} )_+  )}\,. 
\end{equation}
  Now, if $(p_1, \ldots , p_n) \ne ( d / n , \ldots , d / n)$ then there exists $(i,j)$ such that $p_i < d / n   < p_j$ (since the average $d/n$ is in the convex hull of $(p_1, \ldots , p_n)$).  We consider $(q_1, \ldots, q_n)$ as above such that $q_i = d / n $ and $  p_i < q_j  = p_i + p_j  - d / n < p_j   $. Then the bound \eqref{eq:lind} applies here and $\qma = \max_k q_k \leq \pma$. We may thus repeat the same operation to $p^1 = (q_1, \ldots , q_n)$ and  get $p^2$ and so on. After  $ m \leq n$ iterations, we arrive at $p^m = (d/n, \ldots , d/n)$. Summing the $m$ times \eqref{eq:lind} gives 
  $$
  |P(k) - Q(k) | \leq  2 m  \frac{d\pma}{n}    \me^{ - d h(( \frac{ k - 2 - d}{ d} )_+  )}  \leq 2  d \pma   \me^{ - d h (( \frac{ k - 2 - d}{ d} )_+  )}\,.
  $$
This gives the first claim. The same argument applied to the right-hand side of  \eqref{eq:lind} gives the second claim.  \epr
 
We are finally ready for the proof of Theorem \ref{th:tail}

\begin{proof}[Proof of Theorem \ref{th:tail}] 
Let $Z$ be a random variable with distribution $\BIN( d / n , \ldots , d/n)$. In view of Lemma \ref{Lem:BinomialTail}, it sufficient to prove (up to adjusting the universal constant $C >0$) that 
$$
\ABS{ \p ( X > k) - \p (Z > k) } +   \ABS{ \p ( X =  k) - \p (Z =  k) } \leq C \frac{ \pma k^{\f 5 2}}{d}   \p ( Y = k)\,. 
$$
Then, from Lemma \ref{le:lind}, we deduce that it is sufficient to check that for $k \geq d$, 
\begin{equation}\label{eq:finta}
\frac{d \me^{ - d h (( \frac{ k- 2 - d}{ d} )_+)  }}{\p (Y = k)} \leq C \frac{  k^{\f 5 2}}{d} \,. 
\end{equation}
First, from Stirling's formula, we find 
$$
\p ( Y = k ) = \f{d^{k}}{k!}\me^{-d} \geq \frac{  \me^{- k \log (k / d) + k - d} }{ C \sqrt k}= \frac{\me^{-d h ( \frac{ k - d}{ d})}}{ C \sqrt k}   \,. 
$$
Secondly, from the convexity of $ x \mapsto h(x_+)$, we find 
$$
d h \PAR{\PAR{ \frac{ k- 2 - d}{ d} }_+}  = d h \PAR{\PAR{ \frac{ k - d}{ d}  - \f 2 d  }_+} \geq d h \PAR{ \frac{ k - d}{ d}}  - 2 \log \PAR{ \f k d }\,. 
$$
It follows that 
$$
\frac{\me^{ - d h ((\frac{ k- 2 - d}{ d})_+)  }}{\p (Y = k)}  \leq C \sqrt k \PAR{ \f k d }^2\,. 
$$
This concludes the proof of \eqref{eq:finta}.
\end{proof}

\appendix

\section{Auxiliary results}
  
   \beg{lem}\la{lem:covBernoulli}Let, in a \pro space, $E\subset E'$ and $F\subset F'$ be some events. Assume that $(E'\backslash E)  \cap F  = (F'\backslash F)  \cap E   = \emptyset$. Then $$
 \Cov(\one_{E'},\one_{F'})   =  \Cov(\one_E,\one_F) + \p(E'\bck E) \p(F)  +  \p(F'\bck F) \p ( E)\,.$$ \en{lem}
   
   \bpr Set $E'' \deq E'\bck E$ and $F'' \deq F'\bck F$. We have
   \begin{align*}
   &\mspace{-30mu} \Cov(\one_{E'},\one_{F'})-\Cov(\one_E,\one_F)
   \\
   &=\p(E'\cap F')-\p(E\cap F)-(\p(E')\p(F')-\p(E)\p(F))\\
   &=\p((E'\cap F')\bck (E\cap F))-[(\p(E)+\p(E''))(\p(F)+\p(F''))-\p(E)\p(F)]
  \end{align*}
   As $$\ds (E'\cap F')\bck (E\cap F)=(E\cap F'')\cup(E''\cap F)\cup(E''\cap F'')\,,$$ we conclude easily. 
   \epr

  \beg{lem}\la{StarGraph}The adjacency matrix $A$ of a star graph with central degree $D\ge 1$ (see Definition \ref{def:star_graph}) is a $(D+1)\times (D+1)$ real symmetric matrix with nonzero eigenvalues    $\pm\sqrt{D}$ and associated   eigenvectors  $ (\pm\sqrt{D},1, \ld, 1)$  (the first coordinate corresponds to the centre of the star).\end{lem}

\bpr One notices that the matrix   has 
  rank $2$ and that the vectors given here are actually eigenvectors   for $\pm\sqrt{D}$. 
\epr

The following result is \cite[Theorem 2.1]{LeVershynin}. It concerns general inhomogeneous Erd\H{o}s-R\'enyi graphs with mean adjacency matrix $(p_{ij})_{i,j \in [n]}$. We recall that a weighted graph has adjacency matrix $A'$ whose entries are nonnegative real numbers, with the entry $A'_{ij} \geq 0$ denoting the weight of the edge $\{i,j\}$.

 \beg{Th}\la{LeVer} Set $\pma \deq \max_{i,j}p_{ij}$ and choose $r\ge 1$. Then the following holds with \pro at least $1-n^{-r}$. Consider a subset of at most $10\pma^{-1}$ vertices and reduce  the weights of the edges incident to those vertices in an arbitrary way. Then the adjacency matrix $A'$ of the new (weighted) graph satisfies $$\|A'-\E A\|\le Cr^{3/2}(\sqrt{n\pma}+\sqrt{d'})\,,$$ where $C$ is a constant independent of $r$, $d'=\max_{1\le i\le n}\|R'_i\|_{\ell^1} $ with $R_1'$, \ld, $R_n'$ the rows of $A'$. 
 \en{Th}

\bibliographystyle{abbrv}
\bibliography{bib}

\bigskip

\noindent
Florent Benaych-Georges, 
MAP 5 (CNRS UMR 8145) - Universit\'e Paris Descartes, 45 rue des Saints-P\`eres 75270 Paris cedex~6,  France. Email:
\href{mailto:florent.benaych-georges@parisdescartes.fr}{florent.benaych-georges@parisdescartes.fr}.
\\[1em]
Charles Bordenave, 
Institut de Math\'ematiques (CNRS UMR 5219) - Universit\'e
Paul Sabatier. 31062 Toulouse cedex 09. France. Email:
\href{mailto:bordenave@math.univ-toulouse.fr}{bordenave@math.univ-toulouse.fr}.
\\[1em]
Antti Knowles, 
University of Geneva, Section of Mathematics, 2-4 Rue du Li\`evre, 1211 Gen\`eve 4, Switzerland. 
Email:
\href{mailto:antti.knowles@unige.ch}{antti.knowles@unige.ch}.

\bigskip

\paragraph{Acknowledgements}
C.\ B.\ is supported by grants ANR-14-CE25-0014 and ANR-16-CE40-0024-01. A.\ K.\ is supported by the Swiss National Science Foundation and the European Research Council.

\end{document}